\newtheorem{lemma}{Lemma}
\newtheorem{proposition}{Proposition}
\newtheorem{theorem}{Theorem}
\newtheorem{corollary}{Corollary}
\newtheorem{example}{Example}
\newtheorem{conjecture}{Conjecture}
\newtheorem{remark}{Remark}
\DeclareMathOperator{\st}{s. \hspace*{-0.03in} t.}
\DeclareMathOperator{\diag}{diag}
\DeclareMathOperator{\Diag}{Diag}
\DeclareMathOperator{\rank}{rank}
\def\texitem#1{\par\smallskip\noindent\hangindent 25pt
               \hbox to 25pt {\hss #1 ~}\ignorespaces}
\newcommand{\eps}{\epsilon}
\newcommand{\Cc}{\mathcal{C}}
\newcommand{\Jc}{\mathcal{J}}
\newcommand{\Kc}{\mathcal{K}}
\newcommand{\Pc}{\mathcal{P}}
\newcommand{\Sc}{\mathcal{S}}
\newcommand{\Lc}{\mathcal{L}}
\title{On the Semidefinite Representability of Continuous Quadratic Submodular Minimization \\ With Applications to Moment Problems}
\author{Samuel Burer\thanks{Department of Management Sciences, University of Iowa, Iowa City, IA 52242-1994, USA. Email: samuel-burer@uiowa.edu}  \and Karthik Natarajan\thanks{Engineering Systems and Design, Singapore University of Technology and Design, 8 Somapah Road, Singapore 487372. Email: karthik\_natarajan@sutd.edu.sg} }
\date{March 2026}
\begin{document}

\begin{onehalfspace}

\maketitle
\begin{abstract}
We study continuous quadratic submodular minimization with bounds and propose a polynomially sized semidefinite relaxation, which
is provably tight for dimension $n \le 3$ and empirically tight for
larger $n$. We apply the relaxation to two moment problems arising in
distributionally robust optimization and the computation of covariance
bounds. Accordingly, this research advances the ongoing study of
continuous submodular minimization and opens new application areas
therein.
\end{abstract}

\section{Introduction}\label{sec:Intro}

Submodular functions have long played an important role in discrete optimization \citep{Lovasz1983}. In recent years, their relevance for continuous optimization has also grown, for example, in the fields of machine learning and artificial intelligence \citep{Bach_2019,bilmes2022submodularitymachinelearningartificial, NIPS2017_58238e9a, JMLR:v23:21-0166, NEURIPS2019_b43a6403, pmlr-v70-staib17a}. A continuous function $f$ defined on $\mathbb{R}^n$ is said to be {\em submodular\/} when
\[
f(x) + f(y) \ge f(x \vee y) + f(x \wedge y),
\]
for all pairs of vectors $x,y \in \mathbb{R}^n$, where the $\vee$ and $\wedge$ operators calculate component-wise maximums and minimums, respectively. If $f$ is twice differentiable, submodularity is equivalent to all mixed second partial derivatives being nonpositive. When the pure second partials are also nonpositive, $f$ is said to be {\em DR-submodular\/}, where {\em DR\/} stands for {\em diminishing returns\/}.

Generally speaking, studies on continuous submodular optimization fall into two types: those addressing the maximization of submodular functions, and those addressing minimization. Relatively more attention has been paid to maximization, as noted by \cite{Yu_Kucukyavuz_2024}, and we refer the reader to this paper for an excellent, recent summary of the literature on optimization with submodular functions. In our paper, we focus on {\em continuous submodular minimization (CSM)\/}.

As discussed by \cite{Bach_2019} and \cite{Axelrod_Liu_Sidford_2020}, many of the results for submodular minimization in discrete settings---in particular, minimization over the lattice $\{0,1\}^n$---have natural extensions to CSM over the box $[0,1]^n$. Indeed, the property of submodularity is generally considered to make minimization  easier due to a strong connection with convexity. Even still, a number of fundamental questions remain.

In this paper, we specifically study the minimization of a submodular quadratic function over $[0,1]^n$, a problem which we call {\em quadratic submodular minimization over the box (QSMB)\/}:
\begin{align}
\min \left\{x^T Q x + c^T x : x \in [0,1]^n \right\}, \label{equ:qpb}
\end{align}
where $Q$ is a symmetric matrix and $c$ is a column vector. We will
often refer to (\ref{equ:qpb}) by its pair $(Q,c)$. Here,
submodularity is equivalent to the off-diagonal entries of $Q$ being
nonpositive, and we say that such a $Q$ is a {\em submodular matrix\/}
and that the pair $(Q,c)$ is {\em submodular\/}. Note that, if the
constraints were $x \in [l, u]$, then we could convert to the form
(\ref{equ:qpb}) by a simple affine transformation of $[l,u]$ to
$[0,1]^n$ with a corresponding update to the objective function. This
transformation preserves certain properties of the Hessian matrix,
e.g., the signs of its eigenvalues and its submodularity. Another
transformation which preserves submodularity is the flip operation $x
\mapsto -x$.

When $Q$ is completely arbitrary, problem (\ref{equ:qpb}) is NP-hard \citep{Horst.etal.2000}, but it is polynomial-time solvable in certain cases---for example, when $Q$ is positive semidefinite and hence (\ref{equ:qpb}) is convex. Submodularity also impacts the computational complexity. In particular, when $(Q,c)$ is submodular and also the diagonal entries of $Q$ are nonpositive, which is DR-submodularity as defined above, then the objective function is concave along each dimension and hence (\ref{equ:qpb}) reduces to a special instance of submodular minimization over the lattice $\{0,1\}^n$, which is well-known to be solvable in polynomial time; see Proposition 1.1 in \cite{pmlr-v70-staib17a}, for example. Problem (\ref{equ:qpb}) with DR-submodularity is in fact solvable using a polynomial sized linear program; see Proposition 10 in \cite{Padberg}. Another important case occurs  when $(Q,c)$ is submodular and $c \le 0$. In this case, \cite{Kim-Kojima_2003} build on the results of \cite{Zhang_2000} to show that (\ref{equ:qpb}) can be solved in polynomial-time via its basic SDP relaxation; see Section \ref{sec:tight} below. In addition, \cite{Axelrod_Liu_Sidford_2020} present a probabilistic algorithm for solving a class of CSM over the box that includes QSMB. Their algorithm is linear in $n$ and polynomial in the ratio $L/\varepsilon$, where $L$ is the submodular function's Lipschitz parameter and $\varepsilon$ is the additive error in the final optimal value. In particular, the polynomial dependence on $L/\varepsilon$ makes their algorithm pseudo-polynomial.

\subsection{Motivating examples}

We consider a few examples to motivate the applicability and study of QSMB.

\begin{example}[Model for multi-product pricing with substitutes and linear demand]
Consider $n$ substitute products with price decision vector $p \in
\mathbb{R}^n$ and linear demand model given by $d(p) = a - B p$ where
$a \in \mathbb{R}^n$ and $B \in \mathbb{R}^{n \times n}$. The price
vector $p$ is chosen in the box $[l,u]$ where we assume $d(p) \geq 0$
for all $p \in [l,u]$. Then the revenue maximization problem is
formulated as the quadratic maximization problem
\begin{equation*}
\max \left\{p^T(a-Bp): p \in [l,u] \right\}.
\end{equation*}
Note that, without loss of generality, $B$ is symmetric by replacing
$B$ by $(B + B^T)/2$, and in the case that the off-diagonal entries of
$-B$ are nonnegative, this is equivalent to a submodular minimization
after negating the objective. Indeed, it is natural to assume the
off-diagonal entries of $B$ are nonpositive since $B_{ij} \leq 0$ for
$i \neq j$ implies the demand of product $i$ increases as the price of
product $j$ increases. While the diagonal entries of $B$ are often
assumed to be nonnegative to capture the effect of the demand of a
product decreasing as its price increases, it is known that for
certain goods, such as luxury goods, this does not hold. Indeed,
allowing for general values on the diagonal of $B$, one can address a
larger class of multi-product pricing problems via CSM.
\end{example}

\begin{example}[Robust counterpart with quadratic uncertainty]
Consider the following linear constraint in $x$:
\begin{equation*}
\xi^T A(x) \xi + b^T(x) \xi + c(x) \leq t
\end{equation*}
where $A(x)$, $b(x)$, and $c(x)$ are assumed to depend affinely on the decision $x$. The robust counterpart of this constraint when $\xi$ is uncertain and varies in $[l,u]$ is given by:
\begin{equation*}
\xi^T A(x) \xi + b^T(x) \xi + c(x) \leq t \ \ \forall \ \xi \in [l,u].
\end{equation*}
Unfortunately, this robust counterpart over the box is known to be
intractable in general; see Section 1.4 in \cite{Rob}. However, when
the matrix $-A(x)$ is submodular for all $x$, then the robust
counterpart is equivalent to a submodular minimization. We extend this
to distributionally robust optimization in Section \ref{sec:dro}.
\end{example}

\begin{example}[Quadratic adjustable robust counterpart]
Consider an uncertain linear inequality
\begin{equation*}
a(\xi)^Tx + b^T y(\xi) + c(\xi) \leq t \ \ \forall \ \xi \in [l,u].
\end{equation*}
where $x$ is a non-adjustable decision vector and $y$ is an adjustable
decision vector that depends on the uncertainty $\xi$. Further assume
that $a(\xi)$ and $c(\xi)$ depend affinely on the uncertainty $\xi$
while $b$ is fixed, which corresponds to fixed recourse. Such
constraints naturally arise in adjustable robust optimization. If we
allow $y(\xi)$ to depend affinely on the uncertainty, this leads to an
affinely adjustable robust counterpart of the uncertain linear
inequality, which is known to be computationally tractable. If we
allow $y(\xi)$ to depend quadratically on $\xi$, this leads to a
quadratically adjustable robust counterpart of the uncertain linear
inequality, which is known to be computationally intractable; see
Section 14.3.2 in  \cite{Rob}. However, if we restrict the quadratic
decision rule, we find a link with submodular minimization. For
simplicity, we focus on a single adjustable decision
variable $y(\xi) \in \mathbb{R}$ and $b \in \mathbb{R}$ with the
result generalizing in a straightforward manner to multiple adjustable
decision variables. Let $y(\xi) = \xi^TY\xi + y^T\xi + y_{0}$ where
the adjustable decision variables are transformed to $Y \in
\mathbb{S}^n$, $y \in \mathbb{R}^n$, $y_0 \in \mathbb{R}$. We can
rewrite the quadratic adjustable robust counterpart as:
\begin{equation*}
\max \left\{ a(\xi)^Tx + b(\xi^TY\xi + y^T\xi + y_0) + c(\xi)  \ : \ \xi \in [l,u] \right\} \leq t.
\end{equation*}
If we restrict the quadratic decision rule such that the off-diagonal
entries of $Y$ have the same sign as $b$, then the left-hand side
reduces to a submodular minimization.
\end{example}

\subsection{Contributions and structure of the paper}

The main contributions and the structure of the paper are as follows:

\texitem{(a)} In light of the existing literature, our first goal is
to study a polynomial-time semidefinite programming relaxation of
(\ref{equ:qpb}), which is inspired by \cite{Kim-Kojima_2003}. This
relaxation is {\em tight\/} (i.e., admits no relaxation gap) for $n
\le 3$, a result which we prove in Section \ref{sec:tight} making use
of technical results established in Section \ref{sec:techlemma}.  We
also conjecture that the relaxation is tight for all $n$ based on
extensive empirical evidence presented in Section \ref{sec:numerical}.

\texitem{(b)} We then apply the semidefinite programming reformulation
of QSMB to approximate---and in low dimensions, solve exactly---two
optimization models involving moments of probability measures.

\begin{itemize}

\item In the first model discussed in  Section \ref{sec:dro}, we
consider a class of distributionally robust optimization (DRO)
problems over an ambiguity set, which is new in the literature. This
ambiguity set models random vectors supported in $[0,1]^n$, where the
mean and the second moment for each random variable is fixed and also
given are lower bounds on the pairwise cross (mixed) moments of degree
two. Our SDP for QSMB enables us to approximate this class
of DRO problems efficiently, providing an upper bound on the worst-case expected
cost.  This is closely related to, yet different from, the SDP
formulation of DRO problems for a popular ambiguity set proposed by
\cite{Delage}, where the mean is fixed and the cross moments are
bounded from above in the positive semidefinite order.

\item In the second model discussed in  Section \ref{sec:cov}, we
compute a strong upper bound on a nonnegative weighted sum of
covariances of random variables given only the means and the variances
of the random variables, again supported in $[0,1]^n$. This extends
the Cauchy-Schwarz inequality to bounded random variables in higher
dimensions. We note that the SDP relaxation provides the best upper
bound possible in low dimensions ($n \le 3$).

\end{itemize}

\texitem{(c)} In the numerical results in Section \ref{sec:numerical},
we present our empirical evidence for the tightness of the relaxation in higher
dimensions. We also compare the SDP formulations of Sections \ref{sec:dro} and
\ref{sec:cov} with existing approaches, and in particular, we
investigate formulations that compute moment bounds on the energy
function defined by Laplacian matrices of graphs. The results
illustrate that the bounds from our approaches can indeed be stronger
than existing approaches.

\noindent We conclude in Section \ref{sec:conclusion} with a discussion on the computational complexity of quadratic minimization both over the box and the Boolean hypercube under assumptions of convexity and submodularity.

Overall, this paper extends polynomial-time results for convex
minimization to quadratic submodular minimization over the box (QSMB)
via a provably exact SDP relaxation for $n \le 3$, which is
conjectured to be exact for all $n$ based on empirical evidence, and
examines various applications of this result. As such, this paper lays
the groundwork for further study of continuous submodular minimization
and indeed can have important ramifications in areas such as robust
and distributionally robust optimization.

\subsection{Notations}

We use $\mathbb{R}^n$ to denote the space of real $n$-dimensional
vectors, $\mathbb{S}^n$ to denote the space of symmetric real $n
\times n$ matrices, and $\mathbb{R}^{m \times n}$ to denote the space
of real $m \times n$ matrices. The vector $x$ is a column vector by
default and $x^T$ is the transposed row vector. We use $e$ to denote
the vector of ones. We write $A \succeq 0$ ($A \succ 0$) to denote
that matrix $A$ is positive semidefinite (positive definite) and $A
\succeq B$ to denote $A-B \succeq 0$. The vector formed with the
diagonal entries of the matrix $A$ is given by $\mbox{diag}(A)$, and
the diagonal matrix formed with the entries of the vector $a$ is given
by $\mbox{Diag}(a)$. Given $A, B \in \mathbb{R}^{m \times n}$, we let
$A \bullet B = \mbox{trace}(A^TB)$. For vectors $x, y \in
\mathbb{R}^n$, we use $x \circ y$ to denote their componentwise
(Hadamard) product. We use $\sqrt{x}$ to denote a vector with the
square root of the entries of the vector $x$. A random vector is
denoted by $\tilde{\xi}$, and we use $\mathbb{E}_\mathbb{P}[\cdot]$ to denote
the expectation with respect to the distribution $\mathbb{P}$,
$\mbox{Var}[\tilde{\xi}_i]$ to denote the variance of $\tilde{\xi}_i$,
and $\mbox{Cov}[\tilde{\xi}_i,\tilde{\xi}_j]$ to denote the covariance
of $\tilde{\xi}_i$ and $\tilde{\xi}_j$.

\section{A Tight Semidefinite Relaxation for \texorpdfstring{$n \le 3$}{n <= 3}} \label{sec:tight}

In this section, we establish our main theoretical result, namely that
there exists a tight semidefinite relaxation of (\ref{equ:qpb}) for
dimension $n \le 3$.  We first review the relevant literature that
motivates our approach.

\subsection{Existing results}

\cite{Kim-Kojima_2003} proved the following result for QSMB for
general $n$ with additional restrictions on $c$:

\begin{proposition}[Theorem 3.1 in \cite{Kim-Kojima_2003}] \label{pro:KimKojima}
Let $n$ be arbitrary, and suppose $(Q,c)$ is submodular with $c
\le 0$. Then the optimal value of (\ref{equ:qpb}) equals the optimal
value of the SDP relaxation
\[
\min \left\{ Q \bullet X + c^T x : \diag(X) \le e, \ \begin{pmatrix} 1 & x^T \\ x & X \end{pmatrix} \succeq 0 \right\}.
\]
\end{proposition}

\noindent In fact, the authors show that, if $(x^*, X^*)$ is an optimal
solution of the SDP, then the vector $\sqrt{\diag(X^*)}$ is an optimal
solution of (\ref{equ:qpb}). Furthermore, the positive semidefiniteness
condition on the $(n+1)\times (n+1)$ matrix can be relaxed to positive
semidefiniteness of the $n(n+1)/2$ principal submatrices of size $2
\times 2$ without changing the conclusion of the theorem; see Theorem
3.5 in \cite{Kim-Kojima_2003}. It is also straightforward to construct
examples with $n=2$ and some $c_j > 0$ such that this SDP relaxation
admits a gap.

When $c$ is arbitrary, we can attempt to close the gap using a tighter
SDP relaxation. A typical approach, which we adapt in this paper, is
to include valid inequalities derived from the feasibility condition
$x \in [0,1]^n$. Indeed, a common class of valid inequalities are the
so-called {\em McCormick inequalities\/} or {\em RLT (reformulation
linearization technique) constraints\/}, which are derived for each
pair $1 \le i \le j \le n$ as follows \citep{McCormick.1976}:
\[
\begin{array}{rcl}
x_i x_j \ge 0 \quad & \Longrightarrow & \quad X_{ij} \ge 0 \\
x_i (1 - x_j) \ge 0 \quad & \Longrightarrow & \quad X_{ij} \le x_i \\
(1 - x_i)x_j \ge 0 \quad & \Longrightarrow & \quad X_{ij} \le x_j \\
(1 - x_i)(1 - x_j) \ge 0 \quad & \Longrightarrow & \quad X_{ij} \ge x_i + x_j - 1.
\end{array}
\]
\noindent In matrix form, these valid constraints are expressed as
\begin{equation*}
    X \ge 0, \quad\quad X \ge xe^T + ex^T - ee^T, \quad\quad X \le xe^T.
\end{equation*}

\noindent Note that two of these matrix inequalities provide lower
bounds on the components of $X$, whereas the third provides upper
bounds. In addition, the right-hand side of $X \le xe^T$ is
non-symmetric, and so this single inequality captures both $X_{ij} \le
x_i$ and $X_{ij} \le x_j$ for all $i \le j$. Also note that the
diagonal inequalities of $X \le xe^T$ ensure $\diag(X) \le x$, a
strengthening of the constraint $\diag(X) \le e$ in Proposition
\ref{pro:KimKojima}.

The SDP relaxation, which incorporates the RLT constraints, is known to
be tight for $n=2$ for all data inputs $(Q,c)$:

\begin{proposition}[Theorem 2 in \cite{Anstreicher.Burer.2010}] \label{pro:BurerAnstreicher}
    For $n=2$ and arbitrary $(Q,c)$, the SDP relaxation
    \begin{align*}
        \min \left\{ Q \bullet X + c^T x : 
        \begin{array}{l}
        X \ge 0, \\
        X \ge xe^T + ex^T - ee^T, \\
        X \le xe^T,
        \end{array} \
        \begin{pmatrix} 1 & x^T \\ x & X \end{pmatrix} \succeq 0 \right\}
    \end{align*}
    is a tight relaxation of (\ref{equ:qpb}).
\end{proposition}

\noindent However, the authors also show that this relaxation admits a
gap for $n \ge 3$ and general $(Q,c)$.

\subsection{Additional background}

Focusing our attention on the submodular case, we consider the
following relaxation, which adds only the RLT upper bounds to the
relaxation in Proposition \ref{pro:KimKojima}:
\begin{align}
    \min \left\{ Q \bullet X + c^T x : X \le xe^T, \ \begin{pmatrix} 1 & x^T \\ x & X \end{pmatrix} \succeq 0 \right\}. \label{equ:sdp}
\end{align}
The intuition for only including the upper bounds comes from the
submodularity of $Q$, which implies that the objective function is
nonincreasing in the off-diagonal entries of $X$ and hence 
the RLT lower bounds are less likely to be active at optimality.

Indeed, based on extensive empirical evidence, which we describe in
Section \ref{sec:numerical}, we conjecture that this relaxation is
tight for all $n$ and every submodular $(Q,c)$ irrespective of the
signs of the entries in $c$. While we have not been able to prove this
conjecture for general $n$, our main result in Theorem \ref{the:n=3}
states that the relaxation is tight when $n \le 3$. We begin by
introducing some notation and important concepts.

Define
\[
    Y(x,X) := \begin{pmatrix} 1 & x^T \\ x & X \end{pmatrix}
\]
to be the symmetric matrix appearing in the SDP relaxation
(\ref{equ:sdp}). In particular, $Y(x,X) \succeq 0$ is the positive
semidefinite condition. Also define
\begin{align*}
    r(x,X) &:= \text{rank}(Y(x,X)), \\
    a(x,X) &:= \left| \left\{ (i,j) : X_{ij} = x_i \right\} \right|.
\end{align*}
In words, $a(x,X)$ is the number of active inequalities in the linear
inequality condition $X \le x e^T$ of (\ref{equ:sdp}). In particular,
$a(x,X) \le n^2$ since there are $n^2$ inequalities in $X \le xe^T$.

The following lemma relates $r(x,X)$ and $a(x,X)$ at extreme points of
(\ref{equ:sdp}). In particular, the number of active inequalities grows
quadratically with the rank.

\begin{lemma} \label{lem:pataki}
    Let $(x,X)$ be an extreme point of the feasible set of
    (\ref{equ:sdp}). Define $r := r(x,X)$ and $a := a(x,X)$. It
    holds that
    \[
        r(r + 1) \le 2(a + 1).
    \]
    Thus, irrespective of $n$, the following low-rank combinations of
    $r$ and $a$ are possible: $r = 1$ with $a \ge 0$; $r = 2$ with $a \ge
    2$; $r = 3$ with $a \ge 5$; and $r = 4$ with $a \ge 9$.
\end{lemma}

\begin{proof}
We recall a celebrated result due to \cite{Pataki.1998}:

\begin{quote}
Consider an SDP feasible set in block standard form: $F := \{ X^j \succeq 0,\ j
= 1,\ldots,p : \sum_{j=1}^p A^j_i \bullet X^j = b_i,\ i = 1,\ldots,m \}$. Let
$(X^1,\ldots,X^p)$ be an extreme point of $F$, and define $r_j := \rank(X^j)$.
Then $\sum_{j=1}^p r_j(r_j+1) \le 2m$.
\end{quote}

    \noindent The feasible set of (\ref{equ:sdp}) can be expressed in
    block standard form with $p = n^2 + 1$ blocks: one PSD block and
    $n^2$ slack-variables. Moreover, the number of equality constraints
    is $m = n^2 + 1$ with $n^2$ defining the slack variables and one
    defining the top-left entry of $Y(x,X)$ to be 1. The formula
    now follows by applying Pataki's result and noting that, for a
    slack-variable, $r_j (r_j + 1) = 0$ if and only if the corresponding
    slack variable equals 0 and $r_j (r_j + 1) = 2$ if and only if the
    slack variable is positive. In particular, if $s$ is the number of
    positive slacks such that $s + a = n^2$, then we have
    \[
    r(r + 1) + 2s \le 2(n^2 + 1) \quad \Leftrightarrow \quad r(r + 1) + 2(n^2 - a) \le 2(n^2+1)
    \quad \Leftrightarrow \quad r(r + 1) \le 2 (a + 1).
    \]
    The final statement of the lemma follows by considering 
    different cases for the rank of $Y(x,X)$.
\end{proof}

Next, we describe {\em completely positive programming\/}, which
provides a tool for proving tightness of SDP relaxations. Define the
cone
\[
    \Jc := \left\{ y := \begin{pmatrix} x_0 \\ x \end{pmatrix} : 0 \le x \le x_0 e \right\},
\]
which is the homogenization of the unit box $[0,1]^n$, and the {\em
completely positive cone with respect to $\Jc$\/}:
\[
    \Cc\Pc(\Jc) := \text{conv} \left\{ yy^T : y \in \Jc \right\}.
\]
It is known that the optimal value of (\ref{equ:qpb}) equals that of
the completely positive program $\min \{ Q \bullet X + c^T x : Y(x,X)
\in \Cc\Pc(\Jc) \}$; see \cite{Burer.2009, Burer2012}.

The SDP relaxation (\ref{equ:sdp}) can be viewed as relaxing complete
positivity to $Y(x,X) \in \Lc$, where
\[
    \Lc := \left\{ Y := \begin{pmatrix} X_{00} & x^T \\ x & X
    \end{pmatrix} \succeq 0 : X \le xe^T \right\}
\]
is the homogenization of the feasible set of (\ref{equ:sdp}). The
following tightness criterion is used throughout the paper.

\begin{lemma} \label{lem:cp}
$\Cc\Pc(\Jc) \subseteq \Lc$.  Furthermore, if there exists an optimal solution
$(x,X)$ of (\ref{equ:sdp}) such that $Y(x,X) \in \Cc\Pc(\Jc)$, then the SDP
relaxation is tight.
\end{lemma}

Next, we classify the $n^2$ inequalities $X_{ij} \le x_i$ into three
types based on their position in the non-symmetric matrix $X - xe^T$:
{\em s-lower inequalities\/} ($i > j$), {\em s-upper inequalities\/}
($i < j$), and {\em diagonal inequalities\/} ($i = j$). The terms
{\em s-lower\/} and {\em s-upper\/} are shorthand for ``stictly lower'' and
``strictly upper.''

Finally, given a feasible solution $(x,X)$ of
(\ref{equ:sdp}), we consider permuting the rows and columns of
$Y(x,X)$ so that $x_1 \le \cdots \le x_n$, which we call {\em sorting
by $x$\/}. Sorting by $x$ preserves feasibility in $\Lc$, the rank
$r(x,X)$, the number of active inequalities $a(x,X)$, and complete
positivity with respect to $\Jc$.  When $(x,X)$ is sorted by $x$ and
$i < j$, feasibility with respect to $\Lc$ gives $X_{ij} \le x_i \le
x_j$. Hence an active s-upper inequality ($X_{ij} = x_i$) need not
force the s-lower inequality to be active, but an active s-lower
inequality ($X_{ij} = x_j$) forces $x_i = x_j$ and hence the s-upper
inequality to be active as well. A precise consequence of these
observations is recorded in Lemma \ref{lem:lower_active_duplicate} in
Section \ref{sec:techlemma}.

\subsection{Our result}

Theorem \ref{the:n=3} is stated and proved below by combining several
propositions, which are proved in Section \ref{sec:techlemma}.

The first proposition addresses optimal solutions of rank~2: either
the SDP relaxation is tight, or a proper face of the optimal face
exists whose extreme points all have rank~3 or higher.

\begin{proposition} \label{pro:rank2}
If there exists an optimal solution $(x,X)$ of (\ref{equ:sdp}) with
$r(x,X) = 2$, then: (i) the SDP relaxation is tight; or (ii) there exists
a proper face of the optimal face in which all extreme points have rank 3 or
higher.
\end{proposition}

\noindent Our next proposition establishes tightness related to active
diagonal and s-upper inequalities.

\begin{proposition} \label{pro:diag}
Let $(x,X)$ be an optimal solution of (\ref{equ:sdp}). After
sorting by $x$, suppose one of the following holds:
\begin{itemize}
    \item $\diag(X) = x$;
    \item all s-upper inequalities are active, and all diagonal
    inequalities are active except for a single $X_{jj} < x_j$.
\end{itemize}
Then the SDP relaxation is tight.
\end{proposition}

\noindent The final proposition uses an inductive argument to handle
active s-lower inequalities at optimality.

\begin{proposition} \label{pro:at_least_one_strictly_lower}
Suppose the SDP relaxation (\ref{equ:sdp}) is tight when applied to
all submodular instances of (\ref{equ:qpb}) in $n-1$ variables. For
dimension $n$, let $(x,X)$ be an optimal solution of (\ref{equ:sdp})
for a submodular instance $(Q,c)$. After sorting by $x$, suppose there
exists at least one active s-lower inequality.  Then the SDP
relaxation is tight.
\end{proposition}

Using the three propositions above, we can now state and prove the main theorem.

\begin{theorem} \label{the:n=3}
For $n \le 3$, the SDP relaxation (\ref{equ:sdp}) is tight for all
submodular data $(Q,c)$.
\end{theorem}

\begin{proof}
Let $(x,X)$ be an optimal extreme point of (\ref{equ:sdp}), and define
$Y := Y(x,X)$, $r := r(x,X)$, and $a := a(x,X)$. We prove $n=1$,
$n=2$, and $n=3$ in turn.

When $n = 1$, the feasibility condition $X \le xe^T$ amounts to a
single inequality. Hence, $a \le 1$, and Lemma~\ref{lem:pataki}
implies $r = 1$.  It follows that $Y \in \Cc\Pc(\Jc)$. Then the
relaxation is tight by Lemma~\ref{lem:cp}.

When $n = 2$, there are $n^2 = 4$ inequalities in $X \le xe^T$.  By
Lemma~\ref{lem:pataki}, $r(r+1) \le 2(a+1) \le 10$, so $r \le 2$.  If
$r = 1$, the relaxation is tight as above. If $r = 2$,
Proposition~\ref{pro:rank2} implies either (i) the SDP relaxation is
tight or (ii) there exists a proper face of the optimal face whose
extreme points all have rank at least~3. Since any extreme point of a
face is an extreme point of the feasible set, the bound $r \le 2$
applies throughout, ruling out case~(ii). Hence the SDP relaxation is
tight.

When $n=3$, if $r=1$, then we are done. If $r=4$, then $a \ge 9$ by
Lemma \ref{lem:pataki}, so every inequality is active. In particular,
$\diag(X) = x$, and Proposition \ref{pro:diag} implies the SDP
relaxation is tight. If $r=2$, then by Proposition \ref{pro:rank2}
either the SDP relaxation is tight, or there exists a proper face of
the optimal face whose extreme points all have rank~3 or higher; any
such extreme point is handled by the $r = 3$ case next.

This leaves $r=3$, for which $a \ge 5$ by Lemma \ref{lem:pataki}.
Assume without loss of generality that $(x,X)$ is sorted by $x$. Let
$l$, $d$, and $u$ be the number of active s-lower, diagonal, and
s-upper inequalities, respectively. Then $l + d + u = a \ge 5$ with
$l, d, u \le 3$.  If $l \ge 1$, Proposition
\ref{pro:at_least_one_strictly_lower} applies directly. If $l = 0$,
then $d + u \ge 5$ with $d, u \le 3$, so $d \ge 2$. Either $d = 3$ and
$\diag(X) = x$, or $d = 2$ and $u = 3$, meaning all s-upper
inequalities and all but one diagonal inequality are active. In both
subcases, Proposition \ref{pro:diag} applies.
\end{proof}

\begin{remark}[Extending beyond $n = 3$] \label{rem:n4}
To extend Theorem \ref{the:n=3} to $n = 4$, the same proof strategy
could be applied. For $n = 4$, there are $n^2 = 16$ inequalities in $X
\le xe^T$, decomposed into $6$ s-lower, $4$ diagonal, and $6$ s-upper.
Lemma \ref{lem:pataki} constrains the rank to $r \le 5$. Among
the possible ranks, the cases $r = 1$, $r = 2$, $r = 4$, and $r = 5$
are resolved by the existing propositions together with counting
arguments. Specifically, Proposition \ref{pro:rank2} handles $r = 2$
by reducing to higher rank; for $r = 4$ and $r = 5$, the minimum
number of active inequalities ($a \ge 9$ and $a \ge 14$, respectively)
is large enough that either an s-lower inequality must be
active---invoking Proposition
\ref{pro:at_least_one_strictly_lower}---or the diagonal and s-upper
conditions of Proposition \ref{pro:diag} are met.

The bottleneck is $r = 3$, which requires only $a \ge 5$. When no
s-lower inequality is active ($l = 0$), one needs $d + u \ge 5$ with
$d \le 4$ and $u \le 6$. For $n = 3$, the constraints $d, u \le 3$
left only two subcases---both covered by Proposition \ref{pro:diag}.
For $n = 4$, many additional configurations arise (e.g., $d = 2$, $u =
3$) in which neither all diagonal nor all s-upper inequalities are
active, and these are not covered by the current propositions. Resolving these
rank-3 configurations with partial activity is the main obstacle to
proving the conjecture for $n = 4$ and beyond.
\end{remark}

\section{Distributionally Robust Optimization} \label{sec:dro}

Consider the distributionally robust optimization problem
\begin{equation} \label{dro}
\begin{array}{rlll}
\displaystyle \inf_{x \in {\cal X}}  \sup_{\mathbb{P} \in {\cal P}}\mathbb{E}_{\mathbb{P}}\left[f({x},\tilde{\xi}) := \max_{k=1,\ldots,K}\left(\tilde{{\xi}}^TA_k(x)\tilde{\xi}+{b}_k^T(x)\tilde{{\xi}} + c_{k}({x})\right)\right],
\end{array}
\end{equation}
where the decision vector $x$ is chosen from a set ${\cal X} \subseteq
\mathbb{R}^m$ and the random vector $\tilde{\xi}$ is $n$-dimensional.
The probability distribution of $\tilde{\xi}$, denoted by $\mathbb{P}$,
is ambiguous and lies in a set of probability distributions denoted
by ${\cal P}$, commonly referred to as the {\em ambiguity set\/}. The
matrix $A_k(x)\in \mathbb{S}^n$, the vector $b_k(x)\in \mathbb{R}^n$ and
the scalar $c_k(x) \in \mathbb{R}$ are assumed to depend affinely on
$x$ for each $k = 1,\ldots,K$. We hence assume that the cost function
$f({x},{\xi})$ is piecewise affine and thus convex in ${x}$ for a fixed
${\xi}$ and is piecewise quadratic but not necessarily convex in ${\xi}$
for a fixed ${x}$.

 Solving \eqref{dro} corresponds to finding the optimal decision $x$
 that minimizes the worst-case expected cost computed over all
 distributions in the ambiguity set. In this section, our goal is to
 propose a new moment-based ambiguity set, which incorporates
 submodular minimization and guarantees a computationally
 tractable---and empirically tight---bound via the SDP relaxation
 (\ref{equ:sdp}).

\subsection{A new moment ambiguity set}

Let ${\cal P}([0,1]^n)$ denote the set of all probability distributions with support contained in $[0,1]^n$. Given fixed $\mu \in \mathbb{R}^n$ and $\Sigma \in \mathbb{S}^{n}$, define
\begin{equation} \label{eq:prob}
\begin{array}{lll}
    {\cal P}  := \left\{
    \mathbb{P} \in {\cal P}([0,1]^n) \ : \
    \mathbb{E}_{\mathbb{P}}[\tilde{\xi}] = \mu, \
    \mathbb{E}_{\mathbb{P}}[\mbox{diag}(\tilde{\xi}\tilde{\xi}^T)] = \mbox{diag}(\Sigma), \
    \mathbb{E}_{\mathbb{P}}[\tilde{\xi}\tilde{\xi}^T] \geq \Sigma
    \right\}.
 \end{array}
 \end{equation}
In this ambiguity set, the support of the random variable $\tilde{\xi}_i$ is contained in the interval $[0,1]
$, with mean fixed at $\mu_i = \mathbb{E}[\tilde{\xi}_i] $ and second moment fixed at $ \Sigma_{ii} = \mathbb{E}[\tilde{\xi}_i^2] $ for each $i = 1,\ldots,n$. In addition, the pairwise cross moments of degree two are bounded from below term by term with $\mathbb{E}[\tilde{\xi}_i\tilde{\xi}_j]  \geq \Sigma_{ij}$ for all $i <j$. Note that $\Sigma$ is not required to be positive semidefinite; see Section \ref{sec:delageye} below for more discussion.

We would like to identify tractable necessary and sufficient conditions on $\mu$ and $\Sigma$ that guarantee nonemptiness of ${\cal P}$. Let ${\cal M}([0,1]^n)$ denote the set of all nonnegative finite Borel measures on $[0,1]^n$. As is standard, ${\cal P}$ is nonempty if and only if $(1,\mu,\Sigma) \in \mathbb{R} \times \mathbb{R}^n \times \mathbb{S}^n$ is a member of the following cone:
\begin{equation} \label{eq:mom}
{\cal M} :=
\left\{
    (\lambda_0,\lambda,\Lambda) \ : \
    \exists \ m \in {\cal M}([0,1]^n) \ \ \text{s.t.} \ \
    \begin{array}{l}
    \lambda_0 = \int dm(\xi), \\
    \lambda = \int{\xi} dm(\xi), \\
    \Lambda \le \int{\xi}{\xi}^T dm(\xi), \\
    \diag(\Lambda) = \int \diag({\xi}{\xi}^T) dm(\xi)
    \end{array}
\right\}.
\end{equation}
We will now argue that ${\cal M}$ is semidefinite approximable. To
this end, we start by defining some relevant convex cones.

Define the {\em truncated moment cone of degree 2\/}:
\begin{equation*}
{\cal M}_1 :=
\left\{
    (\lambda_0,\lambda,\Lambda) \in \mathbb{R} \times \mathbb{R}^n \times \mathbb{S}^n \ : \
    \exists \ m \in {\cal M}([0,1]^n) \ \ \text{s.t.} \ \
    \begin{array}{l}
    \lambda_0 = \int dm(\xi), \\
    \lambda = \int{\xi} dm(\xi), \\
    \Lambda = \int{\xi}{\xi}^T dm(\xi)
    \end{array}
\right\}.
 \end{equation*}
We also define the {\em cone of nonnegative polynomials of degree 2\/} on $[0,1]^n$ as
\begin{equation*}
     \Kc_1 := \left\{
     (y_0,y,Y) \in \mathbb{R} \times \mathbb{R}^n \times \mathbb{S}^n \ : \
     y_0 + y^T \xi + \xi^T Y \xi \geq 0 \ \ \forall \ \xi \in [0,1]^n
     \right\}.
 \end{equation*}
 The cones ${\cal M}_1$ and $\Kc_1$ are full dimensional, closed, convex, pointed and dual to each other; see \cite{studden,Monique,schmud}. Also define:
\begin{equation*}
{\cal M}_2 :=
\left\{
    (0,0,\Lambda) \in \mathbb{R} \times \mathbb{R}^n \times \mathbb{S}^n \ : \
    \Lambda \le 0,
    \diag(\Lambda) = 0
\right\}.
 \end{equation*}
and
\begin{equation*}
\begin{array}{rlll}
     \Kc_2 &:= \left\{
     (y_0,y,Y) \in \mathbb{R} \times \mathbb{R}^n \times \mathbb{S}^n \ : \
     Y_{ij} \leq 0 \ \ \forall \ i < j
     \right\} \nonumber \\
     &= \left\{ (y_0,y,Y) : Y \text{ submodular} \right\}.
     \end{array}
 \end{equation*}
 It is easy to check that the cones ${\cal M}_2$ and $\Kc_2$ are closed, convex and dual to each other.

It is now self-evident that ${\cal M} = {\cal M}_1 + {\cal M}_2$. Hence,
%
${\cal M}$ is clearly a full-dimensional, convex, pointed cone.  
Closedness of ${\cal M}$ follows from Corollary 1.12 in
\cite{dickinson}.  The dual cone of ${\cal M}$ is given by
\begin{align}
{\cal M}^*  &=  \displaystyle \left({\cal M}_1 + {\cal M}_2\right)^*
 =  {\cal M}_1^* \cap {\cal M}_2^*
 =  \Kc_1 \cap \Kc_2 \nonumber \\
& = \left\{ (y_0,y,Y) \ : \ y_0+ y^T \xi + \xi^TY\xi \geq 0 \ \ \forall \ \xi \in [0,1]^n, \ \ Y \text{ submodular} \right\}
\\
& =: \Kc. \nonumber
 \end{align}
It follows that ${\cal M} = \Kc^*$ since ${\cal M}$ is a closed convex cone. 
This establishes the following proposition.
 \begin{proposition} \label{prop:duality}
The cones ${\cal M}$ and $\Kc$ are dual to each other.
\end{proposition}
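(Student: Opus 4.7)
The plan is to exploit the decomposition ${\cal M} = {\cal M}_1 + {\cal M}_2$, which is asserted in the paragraph preceding the proposition, and to combine known duality results for the two summands with the bipolar theorem. First I would verify the identity ${\cal M} = {\cal M}_1 + {\cal M}_2$ explicitly: given $(\lambda_0,\lambda,\Lambda) \in {\cal M}$ with witnessing measure $m \in {\cal M}([0,1]^n)$, set $\Lambda_1 := \int \xi \xi^T \, dm(\xi)$ so that $(\lambda_0,\lambda,\Lambda_1) \in {\cal M}_1$, and then $\Lambda_2 := \Lambda - \Lambda_1$ has zero diagonal (by the constraint $\diag(\Lambda) = \int \diag(\xi\xi^T)\,dm(\xi)$) and entrywise nonpositive off-diagonals (by $\Lambda \le \Lambda_1$), so $(0,0,\Lambda_2) \in {\cal M}_2$. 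The reverse inclusion is immediate from the definitions.

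Next I would invoke the cited duality pairs: ${\cal M}_1^* = {\cal K}_1$ is classical and can be found in \cite{studden,Monique,schmud}, while ${\cal M}_2^* = {\cal K}_2$ follows from a direct trace computation---the bilinear pairing restricted to $(0,0,\Lambda) \in {\cal M}_2$ reduces to $Y \bullet \Lambda = 2 \sum_{i<j} Y_{ij} \Lambda_{ij}$ with $\Lambda_{ij} \le 0$, which is nonnegative for every such $\Lambda$ if and only if $Y_{ij} \le 0$ for all $i<j$, i.e., if and only if $Y$ is submodular. Combining these with the general cone identity $({\cal C}_1 + {\cal C}_2)^* = {\cal C}_1^* \cap {\cal C}_2^*$ (valid for any pair of convex cones in a finite-dimensional space) yields ${\cal M}^* = {\cal K}_1 \cap {\cal K}_2 = {\cal K}$.

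To complete the duality I need ${\cal K}^* = {\cal M}$, which by the bipolar theorem reduces to showing that ${\cal M}$ is closed and convex. Convexity is immediate because both ${\cal M}_1$ and ${\cal M}_2$ are convex cones. Closedness is the only delicate point, since the Minkowski sum of two closed convex cones is not closed in general; here I would appeal directly to corollary 1.12 in \cite{dickinson}, as flagged in the text. With closedness in hand, ${\cal M}^{**} = {\cal M}$, so ${\cal M} = {\cal K}^*$, which together with ${\cal M}^* = {\cal K}$ yields the duality statement.

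The main obstacle is the closedness of ${\cal M}_1 + {\cal M}_2$: although ${\cal M}_1$ is a closed truncated moment cone over the compact support $[0,1]^n$ (so tightness of measures is not an issue) and ${\cal M}_2$ is essentially a polyhedral cone, verifying that their sum remains closed requires a compatibility check on recession directions of precisely the sort packaged in the cited corollary. Everything else in the argument is routine bookkeeping with standard cone-duality identities.
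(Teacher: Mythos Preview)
Your proposal is correct and follows essentially the same route as the paper: decompose ${\cal M} = {\cal M}_1 + {\cal M}_2$, use the known dualities ${\cal M}_i^* = {\cal K}_i$ together with the sum-to-intersection formula to obtain ${\cal M}^* = {\cal K}$, and then invoke closedness of ${\cal M}$ (via corollary~1.12 in \cite{dickinson}) and the bipolar theorem to conclude ${\cal K}^* = {\cal M}$. The only difference is that you spell out the verification of ${\cal M} = {\cal M}_1 + {\cal M}_2$ and of ${\cal M}_2^* = {\cal K}_2$ more explicitly than the paper does.
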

 While the component cones ${\cal M}_1$ and $\Kc_1$ do not have tractable characterizations, we are nevertheless able to provide tractable semidefinite approximations of ${\cal M}$ and $\Kc$ using submodularity and the empirically tight SDP relaxation (\ref{equ:sdp}) of Section \ref{sec:tight}.
\begin{proposition} \label{prop:sdpmom}
Define
\[
\Kc_{\rm SDP} := \left\{ (y_0,y,Y) :
\begin{array}{l}
  Y \text{submodular}, \\
  \exists \ Z \in \mathbb{R}^{n \times n} \text{ s.t. } Z \ge 0, \
  \begin{pmatrix} y_0 &(y^T-e^TZ)/2\\ (y-Z^Te)/2 & Y + (Z+Z^T)/2 \end{pmatrix} \succeq 0
\end{array}
\right\}
\]
and
\[
{\cal M}_{\rm SDP} := \left\{ (\lambda_0,\lambda,\Lambda) \ : \
\exists \ W \in \mathbb{S}^n \text{ s.t. } \
\begin{array}{l}
  \Lambda \leq W \leq \lambda e^T, \\
  \diag(\Lambda) = \diag(W) ,\\
  \begin{pmatrix}
    \lambda_0 & \lambda^T\\
    \lambda & W
  \end{pmatrix} \succeq 0
\end{array}
\right\}.
\]
Then $\Kc_{\rm SDP} \subseteq \Kc$ and ${\cal M} \subseteq {\cal
M}_{\rm SDP}$ with equality in both when $n \le 3$.
\end{proposition}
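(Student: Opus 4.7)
The proposition comprises two parts---the SDP representability of ${\cal K}$ and of ${\cal M}$---and the plan is to obtain the former directly from Theorem \ref{the:main} via SDP duality, then to deduce the latter by verifying two inclusions using ${\cal M} = {\cal K}^*$ from Proposition \ref{prop:duality}.

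For the ${\cal K}$ part: by the defining identity of ${\cal K}$, a triple $(y_0, y, Y)$ lies in ${\cal K}$ iff $Y$ is submodular and $g(\xi) := y_0 + y^T \xi + \xi^T Y \xi$ is nonnegative on $[0,1]^n$. Since $Y$ is submodular, Theorem \ref{the:main} implies that $\min_{\xi \in [0,1]^n} g(\xi)$ equals the optimal value of the SDP (\ref{equ:sdp}) with data $(Y, y, y_0)$. The plan is then to dualize this SDP, attaching a nonnegative matrix multiplier $Z$ to the componentwise inequality $X \leq \xi e^T$ and a PSD matrix multiplier $\begin{pmatrix} \alpha & \beta^T \\ \beta & \Gamma \end{pmatrix}$ to the $2 \times 2$-block PSD constraint. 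Standard Lagrangian bookkeeping, carried out by eliminating the primal variables $\xi$ and $X$, forces $\beta = (y - Z^T e)/2$ and $\Gamma = Y + (Z + Z^T)/2$, leaving dual objective $y_0 - \alpha$. Strong duality holds because the primal SDP admits a Slater point (for instance, $\xi = \tfrac{1}{2} e$ and $X = \tfrac{1}{4} ee^T + \varepsilon I$ for small $\varepsilon > 0$), so $\min g \geq 0$ iff the dual value is nonnegative; since enlarging the $(1,1)$ entry of a PSD matrix preserves positive semidefiniteness, this is in turn equivalent to feasibility of the dual at $\alpha = y_0$, which is precisely the claimed SDP characterization of ${\cal K}$.

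For the ${\cal M}$ part, let $\widetilde{\cal M}$ denote the proposed set. The inclusion $\widetilde{\cal M} \subseteq {\cal M} = {\cal K}^*$ follows by pairing witnesses: given $W$ for $(\lambda_0, \lambda, \Lambda) \in \widetilde{\cal M}$ and $Z$ for $(y_0, y, Y) \in {\cal K}$, the Frobenius inner product of the two associated PSD matrices is nonnegative. Expanding it yields $\lambda_0 y_0 + \lambda^T y + W \bullet Y + \sum_{i,j} Z_{ij}(W_{ij} - \lambda_j)$, in which the sum is $\leq 0$ because $W^T = W \leq \lambda e^T$ together with $Z \geq 0$. Replacing $W \bullet Y$ by $\Lambda \bullet Y$ then uses that $Y_{ij} \leq 0$ off the diagonal reverses the inequality $W_{ij} \geq \Lambda_{ij}$ there, while the diagonal contributions match because $\diag(W) = \diag(\Lambda)$; hence $\lambda_0 y_0 + \lambda^T y + \Lambda \bullet Y \geq 0$.

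The reverse inclusion ${\cal M} \subseteq \widetilde{\cal M}$ is direct: given $(\lambda_0, \lambda, \Lambda) \in {\cal M}$ with witness measure $m$, set $W := \int \xi \xi^T \, dm(\xi)$. Then $W \geq \Lambda$ and $\diag(W) = \diag(\Lambda)$ are the very defining conditions of ${\cal M}$; the bound $W \leq \lambda e^T$ follows from $\xi_i \xi_j \leq \xi_i$ on $[0,1]^n$; and the PSD condition holds since $\begin{pmatrix} \lambda_0 & \lambda^T \\ \lambda & W \end{pmatrix} = \int \begin{pmatrix} 1 \\ \xi \end{pmatrix} (1, \xi^T) \, dm(\xi)$ is a nonnegative integral of rank-one PSD matrices. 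The chief technical effort lies in the SDP-duality step for ${\cal K}$---tracking Frobenius inner products carefully (in particular, the appearance of the column-sum vector $Z^T e$ in the off-diagonal block and the symmetrization $(Z + Z^T)/2$ in the principal block) and verifying Slater's condition for strong duality; once the representation of ${\cal K}$ is in hand, the two ${\cal M}$-inclusions are largely a matter of verification.
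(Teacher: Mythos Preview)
Your argument is correct. For the representation of ${\cal K}$ you follow essentially the same route as the paper: invoke Theorem~\ref{the:main} to replace the nonnegativity constraint by the SDP~(\ref{equ:sdp}), dualize, and then absorb the scalar dual variable into the $(1,1)$ block. The only cosmetic difference is that the paper appeals to an external lemma (lemma~18 of Qiu--Y{\i}ld{\i}r{\i}m) for strong duality and attainment, whereas you exhibit an explicit Slater point; both are valid.

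For ${\cal M}$, however, your route genuinely differs. The paper derives the representation of ${\cal M}$ by ``standard dual constructions,'' i.e., by mechanically computing the conic dual of the lifted SDP description of ${\cal K}$. You instead verify the two inclusions directly: $\widetilde{\cal M}\subseteq{\cal K}^*$ by pairing the PSD witnesses $W$ and $Z$ and exploiting submodularity of $Y$ together with $\diag(W)=\diag(\Lambda)$ to pass from $W\bullet Y$ to $\Lambda\bullet Y$; and ${\cal M}\subseteq\widetilde{\cal M}$ by taking $W=\int\xi\xi^T\,dm$ from the measure-theoretic definition of ${\cal M}$. Your approach is more elementary and self-contained---it sidesteps the (sometimes delicate) closure issues that can arise when dualizing a projected cone---while the paper's approach is more mechanical and would transfer more readily to other lifted representations. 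Either way the result is the same, and your verification is clean.
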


\begin{proof}
Introducing an auxiliary variable $z_0$, we first observe that
\[
\Kc_{\rm SDP}
=
\left\{ (y_0,y,Y) \ : \
\begin{array}{l}
y_0 - z_0 \ge 0, \\
Z \geq 0, \ \begin{pmatrix} z_0 &(y^T-e^TZ)/2\\ (y-Z^Te)/2 & Y + (Z+Z^T)/2 \end{pmatrix} \succeq 0 \\
Y \text{ submodular}
\end{array}
\right\}.
\]
Letting $(y_0,y,Y,z_0)$ be an element of the right-hand side, to show
$(y_0,y,Y) \in \Kc$, we must demonstrate that $y_0 + y^T\xi +
\xi^TY\xi \ge 0$ for all $\xi \in [0,1]^n$, which is equivalent to
showing
\[
\nu := \min \left\{ y_0+y^T\xi+\xi^TY\xi :\xi \in [0,1]^n \right\} \ge 0,
\]
which in particular implies $y_0 \ge 0$. We have
\begin{align*}
\nu \ge \nu_{\rm SDP} &:=
\min\left\{y_0+y^T\xi+Y\bullet \Xi \ :\ \Xi \leq \xi e^T, \
\begin{pmatrix}1 &\xi^T\\ \xi & \Xi \end{pmatrix} \succeq 0 \right\} \\
&= \max\left\{y_0 - z_0 \ :\ Z \geq 0, \ \begin{pmatrix} z_0 &(y^T-e^TZ)/2\\ (y-Z^Te)/2 & Y + (Z+Z^T)/2 \end{pmatrix} \succeq 0 \right\},
\end{align*}
where strong duality holds, for example, by Lemma 18 in
\cite{Qiu-Yildirim_2024}. Furthermore, $\nu = \nu_{\rm SDP}$ holds
when $n \le 3$ by Theorem \ref{the:n=3}.  In particular, $\nu \ge
\nu_{\rm SDP} \ge 0$ holds when at least one dual solution has
nonnegative objective value, which is true by assumption. The expression for ${\cal M}_{\rm SDP}$
follows by standard dual constructions, giving ${\cal M} \subseteq
{\cal M}_{\rm SDP}$ with equality for $n \le 3$.
\end{proof}

\noindent Note that the auxiliary square matrix $Z \in \mathbb{R}^{n
\times n}$ used in $\Kc_{\rm SDP}$ is not symmetric, while $W$ in
${\cal M}_{\rm SDP}$ is symmetric.

We summarize the foregoing discussion by concluding that the ambiguity
set ${\cal P}$ in \eqref{eq:prob} is nonempty only if there exists a
matrix $W$ in $\mathbb{S}^{n}$ such that
 \begin{equation} \label{eq:momfeas}
\begin{array}{rlll}
 \displaystyle \Sigma \le W \leq \mu e^T, \quad \diag(W) = \diag(\Sigma), \quad
 \ \begin{pmatrix}
    1 & \mu^T\\
    \mu & W
  \end{pmatrix} \succeq 0.
 \end{array}
\end{equation}

\noindent For $n \le 3$, the converse also holds, so that
\eqref{eq:momfeas} is a necessary and sufficient condition for
nonemptiness of ${\cal P}$.

\subsection{Semidefinite approximation}

The following proposition shows that the optimal value of the
distributionally robust problem \eqref{dro} can be bounded above in
polynomial time by a semidefinite program when the ambiguity set
${\cal P}$ is given by \eqref{eq:prob}. Moreover, the bound is exact
when $n \le 3$.

\begin{proposition} \label{pr:dro}
Suppose:
\begin{enumerate}
    \item[(a)]  ${\cal X}$ is semidefinite representable;
    \item[(b)]  $-A_k(x)$ is submodular for each $x \in {\cal X}$ and each $k = 1,\ldots,K$.
\end{enumerate}
Then \eqref{dro} is bounded above in polynomial time by the SDP
\begin{equation} \label{primal1momsdp}
\begin{array}{rllll}
\displaystyle \inf & \displaystyle y_0 + \mu^Ty + \Sigma \bullet Y & \\
\st \;
& \displaystyle x \in {\cal X}\\
 & \displaystyle Y \text{ submodular} &\\
 & \displaystyle Z_k \geq 0 & \forall \ k =1,\ldots,K\\
& \displaystyle \begin{pmatrix} y_0 & (y^T-e^TZ_k)/2\\ (y-Z_k^Te)/2 & Y + (Z_k+Z_k^T)/2 \end{pmatrix} \succeq \begin{pmatrix} c_k(x) & b_k^T(x)/2 \\ b_k(x)/2 & A_k(x)\end{pmatrix}  & \forall \ k =1,\ldots,K,
\end{array}
\end{equation}
where the decision variables are $x \in \mathbb{R}^m$, $(y_0, y, Y) \in \mathbb{R} \times \mathbb{R}^n \times \mathbb{S}^n$, and $Z_k \in \mathbb{R}^{n\times n}$ for $k= 1,\ldots,K$ with equality holding for $n = 3$.
 \end{proposition}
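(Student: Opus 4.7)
The plan is to apply conic duality to the inner supremum over ${\cal P}$ for fixed $x \in {\cal X}$, and then invoke Proposition~\ref{prop:sdpmom} to convert each resulting semi-infinite polynomial-nonnegativity constraint into an SDP constraint. For fixed $x$, the inner problem is an infinite-dimensional linear program over nonnegative Borel measures $m$ on $[0,1]^n$: maximize $\int f(x,\xi)\,dm(\xi)$ subject to $\int dm=1$, $\int\xi\,dm=\mu$, $\diag(\int\xi\xi^T dm)=\diag(\Sigma)$, and $\int\xi_i\xi_j\,dm\ge\Sigma_{ij}$ for $i<j$. Assigning dual multipliers $y_0\in\mathbb{R}$, $y\in\mathbb{R}^n$, and $Y\in\mathbb{S}^n$ to these constraints---where the one-sided off-diagonal inequality forces $Y_{ij}\le 0$ for $i\neq j$, i.e., $Y$ submodular---a routine Lagrangian calculation yields the dual
\[
\min\ y_0+\mu^T y+\Sigma\bullet Y \quad \st \quad Y\ \text{submodular},\ \ y_0+y^T\xi+\xi^T Y\xi\ \ge\ f(x,\xi)\ \ \forall\,\xi\in[0,1]^n.
\]

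Since $f(x,\xi)=\max_k\{\xi^T A_k(x)\xi+b_k(x)^T\xi+c_k(x)\}$, the semi-infinite constraint splits into $K$ nonnegativity conditions of the form
\[
(y_0-c_k(x))+(y-b_k(x))^T\xi+\xi^T(Y-A_k(x))\xi\ \ge\ 0 \quad \forall\,\xi\in[0,1]^n.
\]
The crucial observation is that each coefficient matrix $Y-A_k(x)$ is submodular, since $Y$ has nonpositive off-diagonals by the dual sign condition and $-A_k(x)$ does by assumption~(b), and submodularity is preserved under addition. Hence Proposition~\ref{prop:sdpmom} applies to the triple $(y_0-c_k(x),\,y-b_k(x),\,Y-A_k(x))$, producing an auxiliary matrix $Z_k\ge 0$ together with exactly the block LMI appearing in (\ref{primal1momsdp}). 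Aggregating across $k=1,\ldots,K$ and adjoining the outer minimization over the SDP-representable set ${\cal X}$ from assumption~(a) reproduces (\ref{primal1momsdp}); polynomial-time solvability then follows because the result is an SDP with polynomially many variables and $K$ LMIs of size $(n+1)\times(n+1)$.

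The main obstacle is justifying strong duality between the primal moment problem and its Lagrangian dual, since the entire argument hinges on an exact $\sup$-to-$\min$ exchange. This is precisely what assumption~(c) delivers: ${\cal M}$ is a closed convex cone (as recorded before Proposition~\ref{prop:duality}), and $(1,\mu,\Sigma)\in\text{int}({\cal M})$ supplies strict conic feasibility of the moment data, so standard conic moment duality yields zero duality gap together with attainment in the dual. The remaining steps---matching Lagrange multipliers to the block layout of (\ref{primal1momsdp}) and verifying submodularity of each $Y-A_k(x)$---are mechanical once Proposition~\ref{prop:sdpmom} is in hand.
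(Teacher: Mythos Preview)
Your proposal is correct and follows essentially the same route as the paper: fix $x$, dualize the inner moment problem using assumption~(c) for strong duality and dual attainment, obtain the constraint ``$Y$ submodular'' from the sign restriction on the off-diagonal multipliers, split the semi-infinite constraint across $k$, observe that $Y-A_k(x)$ is submodular by assumption~(b), and invoke Proposition~\ref{prop:sdpmom}. The paper additionally exhibits an explicit strictly feasible dual point (e.g., $y=0$, $Y=-ee^T$, $y_0$ large) to secure primal attainment as well, but this is not needed for the reformulation you state and does not constitute a gap in your argument.
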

 \begin{proof}
For fixed $x \in {\cal X}$, denote the value of the inner supremum in \eqref{dro} by
\begin{equation*}
\begin{array}{rllll}
v^*(x) := \sup \displaystyle \left\{\mathbb{E}_{\mathbb{P}}\left[\max_{k =1,\ldots,K}\left(\tilde{{\xi}}^T{A}_k(x)\tilde{{\xi}} + {b}_k^T(x)\tilde{{\xi}} + c_{k}({x})\right)\right] :  \mathbb{P} \in {\cal P}\right\}.
\end{array}
\end{equation*}
The dual is given by
\begin{equation*}
\begin{array}{rllll}
v_d^*(x)  = \displaystyle \inf & \displaystyle y_0 + \mu^Ty + Y \bullet \Sigma & \\
\st
 & \displaystyle Y \text{ submodular} \\
& \displaystyle y_0+y^T\xi +\xi^TY\xi \geq \max_{k =1,\ldots,K} \left(\xi^TA_k(x)\xi+{b}_k^T(x){{\xi}} + c_{k}({x})\right) &  \forall \ {\xi} \in [0,1]^n,
\end{array}
\end{equation*}
where $v^*(x) \leq v_d^*(x)$ holds from weak duality. 
Disaggregating the dual constraints across $k = 1,\ldots,K$ gives
 \begin{equation*}
\begin{array}{rllll}
v_d^*(x) = \displaystyle \inf & \displaystyle y_0 + \mu^Ty + \Sigma \bullet Y & \\
\st
 & \displaystyle Y \text{ submodular} \\
 & \displaystyle \left( y_0-c_k(x),y-b_k(x),Y-A_k(x) \right) \in \Kc & \forall \ k = 1,\ldots,K.
\end{array}
\end{equation*}
Here $Y-A_k(x)$ is submodular for each $k$ and $x$. Replacing $\Kc$ with the inner approximation $\Kc_{\rm SDP} \subseteq
 \Kc$ from Proposition \ref{prop:sdpmom} gives
 \begin{equation*}
\begin{array}{rllll}
\overline{v}_d^*(x) = \displaystyle \inf & \displaystyle y_0 + \mu^Ty + \Sigma \bullet Y & \\
\st
 & \displaystyle Y \text{ submodular} \\
 & \displaystyle \left( y_0-c_k(x),y-b_k(x),Y-A_k(x) \right) \in \Kc_{\rm SDP} & \forall \ k = 1,\ldots,K,
\end{array}
\end{equation*}
where $v^*(x) \leq v_d^*(x)  \leq \overline{v}_d^*(x)$. Optimizing over $x \in
 {\cal X}$, we obtain the SDP upper bound as claimed. Equality
holds for $n \le 3$ due to Theorem \ref{the:n=3} and Proposition \ref{prop:sdpmom}.
 \end{proof}

\subsection{Comparison with an existing moment ambiguity set} \label{sec:delageye}

A popular and tractable moment ambiguity set analyzed by \cite{Delage} is
\begin{equation} \label{eq:del}
\begin{array}{lll}
   {\cal Q} = \left\{ \mathbb{P} \in {\cal P}({\cal C}) \ : \ \mathbb{E}_{\mathbb{P}}[\tilde{{\xi}}] = {\mu}, \ \mathbb{E}_{\mathbb{P}}[\tilde{{\xi}}\tilde{{\xi}}^T] \preceq \Sigma \right\}.
 \end{array}
 \end{equation}
Here, the support of the random vector $\tilde{\xi}$ is contained in ${\cal C}$ which is assumed to be convex and compact, the mean is fixed to $\mu$, and the second moment matrix is bounded from above by $\Sigma$ in the positive semidefinite order. It is well-known that ${\cal Q}$ is nonempty if and only if
 \begin{equation*}
\begin{array}{rlll}
 \displaystyle \mu \in {\cal C}, \ \begin{pmatrix}
    1 & \mu^T\\
    \mu & \Sigma
  \end{pmatrix} \succeq 0.
 \end{array}
\end{equation*}
It is interesting to compare these conditions with the analogous ones (\ref{eq:momfeas}) for our ambiguity set ${\cal P}$. These two sets express different phenomena, e.g., the matrix $\Sigma$ in \eqref{eq:del} needs to be positive semidefinite for ${\cal Q}$ to be nonempty but not for ${\cal P}$ to be nonempty. Furthermore, when ${\cal C} = [0,1]^n$ and the matrix $A_k(x)$ is negative semidefinite for each $k$, the distributionally robust optimization (\ref{dro})---where ${\cal P}$ is replaced by ${\cal Q}$---can be reformulated as an SDP using duality (see \cite{Delage}):
  \begin{align}
\displaystyle \min \quad & \displaystyle y_0 + \mu^Ty + \Sigma \bullet Y & \nonumber \\
\st \quad \;
& \displaystyle x \in {\cal X} \nonumber \\
& \displaystyle Y \succeq 0  \label{primal2momsdp} \\
& \displaystyle z_k,w_k \geq 0 & \forall \ k=1,\ldots,K \nonumber \\
& \displaystyle \begin{pmatrix} y_0-e^Tz_k & (y+z_k-w_k)^T/2\\ (y+z_k-w_k)/2 & Y  \end{pmatrix} \succeq \begin{pmatrix} c_k(x) & b_k^T(x)/2 \\ b_k(x)/2 & A_k(x)\end{pmatrix}  & \forall \ k=1,\ldots,K, \nonumber
\end{align}
where the decision variables are $x \in \mathbb{R}^m$, $(y_0, y, Y) \in \mathbb{R} \times \mathbb{R}^n \times \mathbb{S}^n$, and $(z_k, w_k) \in \mathbb{R}^{n} \times \mathbb{R}^{n}$ for $k= 1,\ldots,K$. We provide a numerical comparison of the formulations (\ref{primal1momsdp}) and (\ref{primal2momsdp}) in Section \ref{sec:numerical}.

\section{Covariance Bounds} \label{sec:cov}

This section studies sharp upper bounds on covariance terms when only
first and second moments are known and the support is $[0,1]^n$. We
first formulate a moment problem and characterize feasibility, then
derive an SDP upper bound that is exact for $n \le 3$ under
submodularity. We then provide closed-form bounds and explicit
extremal constructions for $n=2$ and then for general $n$ under an
additional overlap condition on marginal moments.

\subsection{A moment problem and SDP relaxation}

Given only the mean and variance of two random variables
$\tilde{\xi}_1$ and $\tilde{\xi}_2$, with no restrictions on their
support, a well-known upper bound on the covariance of the random
variables is given by the Cauchy-Schwarz inequality:
\begin{equation*}
\begin{array}{rllll}
\mbox{Cov}[\tilde{\xi}_1,\tilde{\xi}_2] \leq \sqrt{\mbox{Var}[\tilde{\xi}_1]\mbox{Var}[\tilde{\xi}_2]},
\end{array}
\end{equation*}
or equivalently
\begin{equation}\label{cauchy}
\begin{array}{rllll}
\mathbb{E}[\tilde{\xi}_1\tilde{\xi}_2] \leq \mathbb{E}[\tilde{\xi}_1]\mathbb{E}[\tilde{\xi}_2]+\sqrt{\mbox{Var}[\tilde{\xi}_1]\mbox{Var}[\tilde{\xi}_2]}. 
\end{array}
\end{equation}
This bound is known to be best possible. We consider a multivariate
version of this problem and develop sharp covariance bounds for
bounded random variables where the means and variances are known.

For $i=1,\ldots,n$, suppose random variable $\tilde{\xi}_i$ has
support contained in $[0,1]$ with known mean $\mu_i $ and standard
deviation $\sigma_i$, or equivalently the variance is $\sigma_i^2$.
Let $(\mu,\sigma)$ represent the vector of means and standard
deviations of the random vector $\tilde{\xi}$. Given a matrix $A \in
\mathbb{S}^{n}$, consider the moment problem
\begin{equation}\label{momb}
\begin{array}{rllll}
v^* =  \displaystyle\max  \left\{\mathbb{E}_{\mathbb{P}}[\tilde{\xi}^TA\tilde{\xi}] \ : \
\begin{array}{c}
\mathbb{P} \in {\cal P}([0,1]^n), \
\mathbb{E}_{\mathbb{P}}[\tilde{\xi}] = \mu, \\
\mathbb{E}_{\mathbb{P}}[\diag(\tilde{\xi}\tilde{\xi}^T)] =\diag(\mu\mu^T+ \sigma\sigma^T)
\end{array}
\right\},
\end{array}
\end{equation}
i.e., $v^*$ is the best possible upper bound on the expectation of
$\tilde{\xi}^TA\tilde{\xi}$ given only the means and variances in
$\tilde{\xi}$ with support contained in the unit hypercube.

The next lemma provides necessary and sufficient conditions for the
feasibility of \eqref{momb}.
\begin{lemma} \label{momfeas}
The moment problem \eqref{momb} is feasible if and only if $0 \leq
\sigma_i^2 \leq \mu_i(1-\mu_i)$ for all $i = 1,\ldots,n$.
\end{lemma}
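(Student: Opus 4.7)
The plan is to prove the two directions separately, with necessity being a one-line consequence of the box support and sufficiency handled by explicit construction of a coordinate-independent product measure.

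\medskip

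\noindent\textbf{Necessity.} For each $i$, the support condition $\tilde\xi_i \in [0,1]$ yields the pointwise inequality $\tilde\xi_i^2 \le \tilde\xi_i$. Taking expectations and using $\mathbb{E}[\tilde\xi_i^2] = \mu_i^2 + \sigma_i^2$ and $\mathbb{E}[\tilde\xi_i] = \mu_i$ gives $\mu_i^2 + \sigma_i^2 \le \mu_i$, i.e., $\sigma_i^2 \le \mu_i(1-\mu_i)$. The lower bound $\sigma_i \ge 0$ is automatic from the definition of standard deviation. Observe that the inequality $\sigma_i^2 \le \mu_i(1-\mu_i)$ also forces $\mu_i \in [0,1]$, which will be useful below.

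\medskip

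\noindent\textbf{Sufficiency.} Assuming $0 \le \sigma_i \le \sqrt{\mu_i(1-\mu_i)}$ for each $i$, I would construct a feasible $\mathbb{P}$ as the product of univariate marginals $\mathbb{P}_i$ supported on $[0,1]$ with $\mathbb{E}_{\mathbb{P}_i}[\tilde\xi_i] = \mu_i$ and $\mathrm{Var}_{\mathbb{P}_i}[\tilde\xi_i] = \sigma_i^2$. Because the constraints of \eqref{momb} involve only the marginal first and second moments, the product measure then satisfies both $\mathbb{E}_\mathbb{P}[\tilde\xi] = \mu$ and $\mathbb{E}_\mathbb{P}[\diag(\tilde\xi\tilde\xi^T)] = \diag(\mu\mu^T + \sigma\sigma^T)$.

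\medskip

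\noindent\textbf{Constructing the marginal $\mathbb{P}_i$.} If $\mu_i \in \{0,1\}$, then the bound forces $\sigma_i = 0$, and I take $\mathbb{P}_i = \delta_{\mu_i}$. Otherwise $\mu_i \in (0,1)$, and I take the mixture
\[
\mathbb{P}_i \;=\; \alpha_i\,\delta_{\mu_i} \;+\; (1-\alpha_i)\bigl(\mu_i\,\delta_1 + (1-\mu_i)\,\delta_0\bigr),
\qquad
\alpha_i \;:=\; 1 - \frac{\sigma_i^2}{\mu_i(1-\mu_i)}.
\]
The hypothesis $\sigma_i^2 \le \mu_i(1-\mu_i)$ ensures $\alpha_i \in [0,1]$, so $\mathbb{P}_i$ is a valid probability measure on $[0,1]$. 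A direct computation shows that both components have mean $\mu_i$, hence $\mathbb{E}_{\mathbb{P}_i}[\tilde\xi_i] = \mu_i$, and the variance equals $(1-\alpha_i)\mu_i(1-\mu_i) = \sigma_i^2$, as required.

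\medskip

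This argument is entirely elementary; there is no substantive obstacle. The only mild care is the case split at $\mu_i \in \{0,1\}$, which is forced to be degenerate by the necessary condition itself. The product-measure step is immediate because the constraints of \eqref{momb} do not involve any cross moments of $\tilde\xi$.
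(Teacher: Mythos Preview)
Your proof is correct and follows the same overall architecture as the paper's: necessity from the pointwise inequality $\tilde\xi_i(1-\tilde\xi_i)\ge 0$ on $[0,1]$, and sufficiency by taking an independent product of univariate marginals with the prescribed mean and variance.

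The one difference is in the choice of marginal. The paper uses a two-point distribution
\[
\tilde\xi_i = \mu_i - \sigma_i\sqrt{\tfrac{p_i}{1-p_i}} \ \text{w.p.}\ 1-p_i,\qquad
\tilde\xi_i = \mu_i + \sigma_i\sqrt{\tfrac{1-p_i}{p_i}} \ \text{w.p.}\ p_i,
\]
and must then verify that $p_i$ can be chosen in $[\sigma_i^2/((1-\mu_i)^2+\sigma_i^2),\ \mu_i^2/(\mu_i^2+\sigma_i^2)]$ so that the two atoms land in $[0,1]$. Your three-point mixture $\alpha_i\delta_{\mu_i}+(1-\alpha_i)(\mu_i\delta_1+(1-\mu_i)\delta_0)$ is cleaner for the purpose of this lemma: the support is manifestly $\{0,\mu_i,1\}\subseteq[0,1]$, and the single parameter $\alpha_i$ is determined directly with no interval check required. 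On the other hand, the paper's parameterized two-point family is not wasted effort---it is reused downstream in Proposition~\ref{n=2} to build the extremal joint distributions attaining the bivariate covariance bound, where having a free parameter $p_i$ is exactly what allows the marginals to be aligned.
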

\begin{proof}
Necessity of $\sigma_i^2 \geq 0$ arises from
$\mathbb{E}_{\mathbb{P}}[(\tilde{\xi}_i-\mathbb{E}_{\mathbb{P}}[\tilde{\xi}_i])^2]
\geq 0$. Necessity of $\sigma_i^2 \leq \mu_i(1-\mu_i)$ arises from
$\mathbb{E}_{\mathbb{P}}[\tilde{\xi}_i(1-\tilde{\xi}_i)] \geq 0$ since
$\xi_i \in [0,1]$. Sufficiency follows from considering the two point
random variable $\tilde{\xi}_i$:
\begin{equation*}
 \tilde{\xi}_i =
    \begin{cases}
      \mu_i - \sigma_i \sqrt{\frac{p_i}{1-p_i}} & \text{w.p.\ } 1-p_i,\\
        \mu_i +\sigma_i \sqrt{\frac{1-p_i}{p_i}} & \text{w.p.\ } p_i
    \end{cases}
    \end{equation*}
    where $p_i \in [\sigma_i^2/((1-\mu_i)^2+\sigma_i^2),\mu_i^2/(\mu_i^2+\sigma_i^2)]$ to ensure that the support of the random variable is in $[0,1]$ with $\mathbb{E}_{\mathbb{P}}[\tilde{\xi}_i] = \mu_i$ and $\mathbb{E}_{\mathbb{P}}[\tilde{\xi}_i^2] = \mu_i^2+\sigma_i^2$. The random vector $\tilde{\xi}$ can be constructed from the marginal distributions using independence.
\end{proof}

We next show that under the assumption that $-A$ is submodular, the
covariance-bounding moment problem admits a tractable SDP upper bound,
and this bound is exact in dimensions $n\le 3$.

\begin{proposition} \label{prop:tight}
Suppose:
\begin{enumerate}
    \item[(a)] $0 \leq \sigma_i^2 \leq \mu_i(1-\mu_i)$ for all $i = 1,\ldots,n$;
    \item[(b)] the matrix $-A$ is submodular.
    \end{enumerate}
Then the semidefinite program
      \begin{equation} \label{primal1momsdp3}
\begin{array}{rllll}
v^*_{\rm SDP} = \displaystyle \max & \displaystyle A \bullet \Sigma & \\
\st
& \displaystyle \diag(\Sigma) = \diag(\mu\mu^T+\sigma\sigma^T)\\
& \displaystyle \Sigma \leq e \mu^T\\
& \displaystyle \begin{pmatrix} 1 & \mu^T\\ \mu& \Sigma \end{pmatrix} \succeq 0.  &
\end{array}
\end{equation}
where the decision variable is $\Sigma \in \mathbb{S}^{n}$, provides an upper bound $v^* \le v^*_{\rm SDP}$ on the moment problem \eqref{momb}, with equality for $n \le 3$.
\end{proposition}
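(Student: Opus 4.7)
The plan is to pass to the Lagrangian dual of the moment problem \eqref{momb}, recognize the resulting semi-infinite constraint as a submodular quadratic nonnegativity condition handled by Theorem \ref{the:main}, and then take the SDP dual of the resulting minimization to recover \eqref{primal1momsdp3}. By Lemma \ref{momfeas} and assumption (a) the primal is feasible, and it is bounded because $[0,1]^n$ is compact. Standard duality for moment problems over a compact support then yields
\[
v^* = \min \left\{ y_0 + \mu^T y + q^T z \ : \ y_0 + y^T \xi + \xi^T \Diag(z) \xi \ge \xi^T A \xi \ \ \forall \ \xi \in [0,1]^n \right\},
\]
where $q := \diag(\mu\mu^T + \sigma\sigma^T)$. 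The dual is strictly feasible---take $z$ componentwise large enough that $\Diag(z) - A \succ 0$, and then $y_0$ sufficiently large---which justifies the strong duality above.

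Next I would rewrite the dual constraint as $y_0 + y^T \xi + \xi^T (\Diag(z) - A) \xi \ge 0$ for all $\xi \in [0,1]^n$. Because $-A$ is submodular by assumption (b) and $\Diag(z)$ is diagonal, the matrix $\Diag(z) - A$ is submodular. Theorem \ref{the:main} therefore applies: the semi-infinite nonnegativity is equivalent to the membership $(y_0, y, \Diag(z) - A) \in {\cal K}$, and Proposition \ref{prop:sdpmom} in turn recasts this as the existence of a matrix $Z \in \mathbb{R}^{n \times n}$ with $Z \ge 0$ together with a single $(n+1) \times (n+1)$ PSD constraint. Consequently $v^*$ equals the optimal value of a finite-dimensional SDP in variables $(y_0, y, z, Z)$.

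Finally, I would take the SDP dual of this minimization. Assigning a PSD multiplier $\Pi = \left(\begin{smallmatrix} \pi_0 & \pi^T \\ \pi & \Sigma \end{smallmatrix}\right)$ to the lifted PSD constraint and an entrywise nonnegative $W$ to the constraint $Z \ge 0$, the Lagrangian is linear in the primal variables. Setting the coefficients of $y_0$, $y$, $z$, and $Z$ to zero yields the stationarity conditions $\pi_0 = 1$, $\pi = \mu$, $\diag(\Sigma) = q$, and $W = e\mu^T - \Sigma$; the sign restriction $W \ge 0$ then translates to $\Sigma \le e\mu^T$, while the objective reduces to $A \bullet \Sigma$. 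This recovers \eqref{primal1momsdp3} exactly, and SDP strong duality---again available because of the Slater point identified above---closes the chain of equalities.

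The main obstacle will be the bookkeeping in this last dualization step, particularly the handling of the non-symmetric $Z$ inside the lift of Proposition \ref{prop:sdpmom} so that the symmetry of $\Sigma$ emerges cleanly from the Lagrangian, together with a careful verification of Slater on both the moment and SDP sides. All the substantive content is otherwise already encapsulated in Theorem \ref{the:main}.
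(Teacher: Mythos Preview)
Your proposal is correct and follows essentially the same route as the paper: dualize the moment problem to a semi-infinite program, invoke submodularity of $\Diag(z)-A$ and Theorem~\ref{the:main} to replace the semi-infinite constraint by an SDP constraint, and then dualize once more to obtain \eqref{primal1momsdp3}. The only cosmetic differences are that the paper writes out the inner SDP dual explicitly rather than citing Proposition~\ref{prop:sdpmom}, and it exhibits the simpler Slater point $y=z=0$ with $y_0$ large; otherwise the arguments coincide.
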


In \eqref{primal1momsdp3}, the diagonal constraint fixes the second
moments, the componentwise inequality enforces $\Sigma_{ij} \leq
\mu_j$, and the block positive-semidefinite constraint is the usual
moment-matrix consistency condition.

\begin{proof}
We proceed in four steps: dual reformulation of \eqref{momb},
replacement of the QSMB constraint by its SDP relaxation, dualization
of the inner SDP, and recovery of the primal semidefinite program
\eqref{primal1momsdp3}.

\textbf{Step 1 (dual reformulation).}
Assumption (a) and Lemma \ref{momfeas} imply (\ref{momb}) is feasible. Its dual is
\begin{equation*}
\begin{array}{rllll}
v_d^*  = \displaystyle \inf & \displaystyle y_0 + \mu^T y + \diag(\mu\mu^T+\sigma\sigma^T)^T z & \\
\st
& \displaystyle y_0 + \xi^T y + \xi^T \Diag({z}) \xi \geq \xi^T A \xi& \forall \ {\xi} \in [0,1]^n,
\end{array}
\end{equation*}
where the decision variables are $y_0 \in \mathbb{R}, y \in
\mathbb{R}^{n}$, and $z \in \mathbb{R}^{n}$.  Then strong duality
holds with $v^* = v_d^*$ because the dual is strictly feasible. For
example, we can set $y = z = 0$ and then take $y_0$ larger than the
maximum value of $\xi^T A \xi$ over $\xi \in [0,1]^n$. We next
reformulate the dual as
\begin{equation*}
\begin{array}{rllll}
v^*  = \displaystyle \inf & \displaystyle y_0 + \mu^T y + \diag(\mu\mu^T+\sigma\sigma^T)^T z & \\
\st
& \displaystyle v(z) := \min\left\{y_0 +  y^T \xi + \xi^T (\Diag({z})-A) \xi\ : \  {\xi} \in [0,1]^n\right\} \geq 0.&
\end{array}
\end{equation*}
\textbf{Step 2 (SDP relaxation of QSMB).}
Under assumption (b), $\Diag(z)-A$ is submodular. Replacing the QSMB
constraint with the SDP relaxation from Theorem \ref{the:n=3} gives the
following optimization:
\begin{equation*}
\begin{array}{rllll}
\rho^* = \displaystyle \inf &  \displaystyle y_0 + \mu^T y + \diag(\mu\mu^T+\sigma\sigma^T)^T z & \\
\st
& \displaystyle \rho(z) := \min\left\{y_0 + y^T \xi +
(\Diag({z})-A)\bullet \Xi\ : \  \Xi \leq \xi e^T, \begin{pmatrix}1
&\xi^T\\ \xi & \Xi \end{pmatrix} \succeq 0\right\} \geq 0.&
\end{array}
\end{equation*}
Note that $v(z) \ge \rho(z)$ with equality for $n \le 3$ by Theorem
\ref{the:n=3}, which means the SDP constraint is more restrictive than
the original QSMB constraint. Hence, $v^* \le \rho^*$ with equality
for $n \le 3$.

\textbf{Step 3 (dualization of the inner SDP).} Taking the dual of the minimization in the constraint,
where both the primal and dual semidefinite programs are strictly
feasible, and following a similar argument as in the proof of
Proposition \ref{pr:dro}, we get
\begin{equation*}
\begin{array}{rllll}
\rho^* = \displaystyle \inf &  \displaystyle \lambda + \mu^T y + \diag(\mu\mu^T+\sigma\sigma^T)^T z & \\
\st
& \displaystyle    Z \geq 0\\
&\displaystyle \begin{pmatrix} -\lambda & ({y-Z^T e})^T/{2}\\ ({y-Z^Te})/{2} & \Diag({z})+(Z+Z^T)/2 \end{pmatrix} \succeq \begin{pmatrix} 0 & 0^T \\ 0 & A \end{pmatrix}.&
\end{array}
\end{equation*}
\textbf{Step 4 (recover the primal SDP).}
The primal semidefinite formulation is then given by
\eqref{primal1momsdp3}. In particular, $v_{\rm SDP}^* = \rho^* \ge
v^*$ with equality for $n \le 3$.
\end{proof}

\subsection{Closed-form solution in dimension 2}

We solve problem (\ref{primal1momsdp3}) in closed form when $n=2$ and
\begin{equation} \label{equ:Adim2}
A = \frac12 \begin{pmatrix} 0 & 1 \\ 1 & 0 \end{pmatrix}
\end{equation}
by explicitly constructing the extremal distribution that attains the upper bound.

\begin{proposition} \label{n=2}
Consider (\ref{momb}) for $n = 2$ and $A$ given by (\ref{equ:Adim2}).
The upper bound is given by
\begin{equation}\label{cauchy1}
\begin{array}{rllll}
  \mathbb{E}_{\mathbb{P}}[\tilde{\xi}_1\tilde{\xi}_2] \leq v^* = v_{\rm SDP}^* = \min\left(\mathbb{E}_{\mathbb{P}}[\tilde{\xi}_1],\mathbb{E}_{\mathbb{P}}[\tilde{\xi}_2],\mathbb{E}_{\mathbb{P}}[\tilde{\xi}_1]\mathbb{E}_{\mathbb{P}}[\tilde{\xi}_2]+\sqrt{\mbox{Var}[\tilde{\xi}_1]\mbox{Var}[\tilde{\xi}_2]}\right),
\end{array}
\end{equation}
and this bound is best possible for all possible parameter values
$\mu_i := \mathbb{E}_{\mathbb{P}}[\tilde{\xi}_i]$ and $\sigma_i:=
\sqrt{\mbox{Var}[\tilde{\xi}_i]}$ for $i = 1,2$.
\end{proposition}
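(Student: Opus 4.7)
The plan is to combine the SDP characterization of $v^*$ from Proposition~\ref{prop:tight} with an explicit construction of extremal distributions attaining the bound. For the matrix $A$ in (\ref{equ:Adim2}) we have $A \bullet \Sigma = \Sigma_{12}$, so the SDP (\ref{primal1momsdp3}) reduces to maximizing $\Sigma_{12}$ subject to the diagonal equations $\Sigma_{ii} = \mu_i^2 + \sigma_i^2$, the McCormick-type upper bound $\Sigma_{12} \le \min(\mu_1,\mu_2)$ (after using symmetry of $\Sigma$), and positive semidefiniteness of the bordered $3 \times 3$ matrix with border $(1,\mu^T)$.

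To establish $v^* \le \min(\mu_1, \mu_2,\, \mu_1\mu_2 + \sigma_1\sigma_2)$, I would take the Schur complement of the PSD constraint with respect to its leading $1 \times 1$ entry, yielding
\[
\begin{pmatrix}
\sigma_1^2 & \Sigma_{12} - \mu_1 \mu_2 \\
\Sigma_{12} - \mu_1\mu_2 & \sigma_2^2
\end{pmatrix} \succeq 0,
\]
which is equivalent to $(\Sigma_{12} - \mu_1\mu_2)^2 \le \sigma_1^2 \sigma_2^2$, i.e., $\Sigma_{12} \le \mu_1\mu_2 + \sigma_1\sigma_2$. Combined with the McCormick bounds, this gives the claimed upper bound on $v^*$.

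For tightness, I would exhibit feasible distributions achieving the bound, splitting into cases based on which of the three quantities is smallest. Assume without loss of generality $\mu_1 \le \mu_2$. If $\mu_1\mu_2 + \sigma_1\sigma_2 \le \mu_1$, take a comonotone two-atom coupling that uses the single-variable construction from Lemma~\ref{momfeas} for each marginal with a common probability parameter $p$, so that $\tilde\xi_1$ and $\tilde\xi_2$ jointly take their high or low values; a direct calculation gives $\mathbb{E}[\tilde\xi_1\tilde\xi_2] = \mu_1\mu_2 + \sigma_1\sigma_2$ independently of $p$. Otherwise, when $\mu_1 < \mu_1\mu_2 + \sigma_1\sigma_2$, construct a joint distribution in which $\tilde\xi_1$ is supported on $\{0,\mu_1/p\}$ with $p := \mu_1^2/(\mu_1^2 + \sigma_1^2)$, while $\tilde\xi_2 = 1$ on the atom $\tilde\xi_1 = \mu_1/p$ and $\tilde\xi_2$ follows a suitable conditional distribution on $[0,1]$ when $\tilde\xi_1 = 0$. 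This forces $\tilde\xi_1 \tilde\xi_2 = \tilde\xi_1$ almost surely, yielding $\mathbb{E}[\tilde\xi_1\tilde\xi_2] = \mu_1$.

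The main obstacle is the feasibility bookkeeping for the second construction: one must verify that $p \le \mu_2$ (so that the conditional mean $(\mu_2 - p)/(1-p)$ of $\tilde\xi_2$ given $\tilde\xi_1 = 0$ lies in $[0,1]$) and that the required conditional second moment is achievable on $[0,1]$. The inequality $p \le \mu_2$ should follow by algebra from the regime assumption $\mu_1 \le \mu_1\mu_2 + \sigma_1\sigma_2$ together with the feasibility condition $\sigma_2^2 \le \mu_2(1-\mu_2)$, and the existence of the conditional distribution then reduces to another one-dimensional moment feasibility check in the spirit of Lemma~\ref{momfeas}; boundary cases where two of the candidate bounds coincide can be handled by convex combination.
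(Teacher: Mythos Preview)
Your proposal is correct and follows essentially the same approach as the paper: reduce the SDP via Schur complement to obtain the three-term minimum, then exhibit extremal distributions built from comonotone couplings of the two-point marginals in Lemma~\ref{momfeas}. The paper organizes the tightness argument by whether the marginal feasibility intervals $[\underline{p}_i,\overline{p}_i]$ with $\underline{p}_i=\sigma_i^2/((1-\mu_i)^2+\sigma_i^2)$ and $\overline{p}_i=\mu_i^2/(\mu_i^2+\sigma_i^2)$ overlap, which dissolves the bookkeeping you flagged: overlap is equivalent to your Case~1 hypothesis $\mu_1\mu_2+\sigma_1\sigma_2\le\min(\mu_1,\mu_2)$ (so a common $p$ exists), and the non-overlap condition $\overline{p}_1<\underline{p}_2$ is equivalent to your Case~2 hypothesis, giving directly the stronger inequality $p=\overline{p}_1\le\underline{p}_2$ (not merely $p\le\mu_2$) that you actually need for the conditional second-moment check.
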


\begin{proof}
For $n = 2$, the semidefinite program \eqref{primal1momsdp3} reduces to
\begin{equation*}
\begin{array}{rllll}
v^* = v^*_{\rm SDP} =
\displaystyle \max \left\{\Sigma_{12} \ : \ \Sigma_{12} \leq \mu_1, \ \Sigma_{12} \leq \mu_2, \
\begin{pmatrix} 1 & \mu_1 & \mu_2\\ \mu_1 & \mu_1^2+\sigma_1^2 & \Sigma_{12} \\
\mu_2 & \Sigma_{12} & \mu_2^2+\sigma_2^2
\end{pmatrix} \succeq 0\right\}.
\end{array}
\end{equation*}
Using the Schur complement theorem, we rewrite this as
\begin{equation*}
\begin{array}{rllll}
v^* = \displaystyle \max \left\{\Sigma_{12} \ : \ \Sigma_{12} \leq \mu_1, \ \Sigma_{12} \leq \mu_2, \ \begin{pmatrix} \sigma_1^2 & \Sigma_{12}-\mu_1\mu_2 \\
\Sigma_{12}-\mu_1\mu_2& \sigma_2^2
\end{pmatrix} \succeq 0 \right\} \\
= \displaystyle \max \left\{\Sigma_{12} \ : \ \Sigma_{12} \leq \mu_1, \
 \Sigma_{12} \leq \mu_2, \ \mu_1\mu_2-\sigma_1\sigma_2 \leq \Sigma_{12} \leq \mu_1\mu_2+\sigma_1\sigma_2 \right\}.
\end{array}
\end{equation*}
It follows that $v^*  = \min \{ \mu_1,\mu_2,\mu_1\mu_2+\sigma_1\sigma_2 \}$.

We now construct the extremal joint distribution using the two-point
marginal distributions from Lemma \ref{momfeas} to show attainment of
the bound. Define $\underline{p}_i =
\sigma_i^2/((1-\mu_i)^2+\sigma_i^2)$ and $\overline{p}_i =
\mu_i^2/(\mu_i^2+\sigma_i^2)$ for $i = 1,2$, and note that
$\underline{p}_i \leq \overline{p}_i$ for $i = 1,2$.

\textbf{Case 1 (overlap).} Suppose the intervals
$[\underline{p}_1,\overline{p}_1]$ and
$[\underline{p}_2,\overline{p}_2]$ overlap, and let $p$ be any value in
the overlapping region. Consider the two-point joint distribution with
perfect positive dependence:
\begin{equation*}
 (\tilde{\xi}_1,\tilde{\xi}_2) =
    \begin{cases}
      \left( \mu_1 - \sigma_1 \sqrt{\frac{p}{1-p}},\mu_2 - \sigma_2 \sqrt{\frac{p}{1-p}} \right) & \text{w.p. } \ 1-p,\\
        \left( \mu_1 +\sigma_1 \sqrt{\frac{1-p}{p}},\mu_2 +\sigma_2 \sqrt{\frac{1-p}{p}} \right) & \text{w.p. } \ p.
    \end{cases}
    \end{equation*}
In this case,
    \begin{equation*}
\begin{array}{rllll}
  \mathbb{E}_{\mathbb{P}^*}[\tilde{\xi}_1\tilde{\xi}_2] & =
  & (1-p) \left( \mu_1 - \sigma_1 \sqrt{\frac{p}{1-p}} \right) \left(\mu_2 - \sigma_2 \sqrt{\frac{p}{1-p}} \right) + p \left( \mu_1 +\sigma_1 \sqrt{\frac{1-p}{p}} \right) \left(\mu_2 +\sigma_2 \sqrt{\frac{1-p}{p}} \right)\\
  &= &\mu_1\mu_2 + \sigma_1\sigma_2.
\end{array}
\end{equation*}
To verify that this bound is attained, note that the overlap condition
implies both $\underline{p}_2 \leq \overline{p}_1$ and
$\underline{p}_1 \leq \overline{p}_2$, which are equivalent to
$\mu_1\mu_2 + \sigma_1\sigma_2 \leq \mu_1$ and $\mu_1\mu_2 +
\sigma_1\sigma_2 \leq \mu_2$, respectively. Hence $v^* = \min \{
\mu_1,\mu_2,\mu_1\mu_2+\sigma_1\sigma_2 \} =
\mu_1\mu_2+\sigma_1\sigma_2$, and the bound is attained.

\textbf{Case 2 (non-overlap).} Suppose the intervals
$[\underline{p}_1,\overline{p}_1]$ and
$[\underline{p}_2,\overline{p}_2]$ do not overlap and, without loss of
generality, suppose $\overline{p}_1 \leq \underline{p}_2$. Consider the
three-point joint distribution with perfect positive dependence:
\begin{equation*}
 (\tilde{\xi}_1,\tilde{\xi}_2) =
    \begin{cases}
      (0,(\mu_2-\mu_2^2-\sigma_2^2)/(1-\mu_2)) & \text{w.p. } \ 1-\underline{p}_2,\\
        (0,1) & \text{w.p. } \ \underline{p}_2-\overline{p}_1,\\
          (\mu_1+\sigma_1^2/\mu_1,1) & \text{w.p. } \ \overline{p}_1.
    \end{cases}
    \end{equation*}
In this case, $\mathbb{E}_{\mathbb{P}^*}[\tilde{\xi}_1\tilde{\xi}_2] =
\mu_1$.  Since $\overline{p}_1 \leq \underline{p}_2$, the overlap
condition $\underline{p}_1 \leq \overline{p}_2$ fails, which is
equivalent to $\mu_1 \leq \mu_1\mu_2 + \sigma_1\sigma_2$. Similarly,
$\overline{p}_1 \leq \underline{p}_2$ implies $\mu_1 \leq \mu_2$.
Hence $v^* = \min \{ \mu_1,\mu_2,\mu_1\mu_2+\sigma_1\sigma_2 \} =
\mu_1$, and the bound is attained.
\end{proof}

\noindent The attainability of this upper bound for $n = 2$ has also
been recently shown in \cite{Cov} (see Theorem 2 therein), although their
proof technique differs from ours.

The $n=2$ result above shows when the bivariate upper bound is
attainable exactly. We now extend this perspective to general $n$ and
identify a condition under which all pairwise upper bounds are
simultaneously attainable.

\subsection{Closed-form solution in dimension \texorpdfstring{$n$}{n}}

We can also solve problem (\ref{momb}) in closed form for general $n$
under additional moment conditions when
\begin{equation} \label{equ:Adimn}
A = \frac12 \begin{pmatrix} 0 & 1  &\ldots & 1 \\ 1 & 0 &\ldots & 1\\
\vdots & \vdots & \ddots & \vdots\\
1 & 1 & \ldots & 0\end{pmatrix} = \frac12 (ee^T - I)
\end{equation}
by explicitly constructing the extremal distribution that attains the
upper bound.

\begin{proposition} \label{n-sumofub}
Consider (\ref{momb}) for general $n$ and $A$ given by (\ref{equ:Adimn}), where the marginal moments satisfy the condition
\begin{equation} \label{conditionfortight}
\max_{i=1,\ldots,n} \frac{\mbox{Var}[\tilde{\xi}_i]}{(1-\mathbb{E}_{\mathbb{P}}[\tilde{\xi}_i])^2+\mbox{Var}[\tilde{\xi}_i]} \leq \min_{i=1,\ldots,n} \frac{\mathbb{E}_{\mathbb{P}}[\tilde{\xi}_i]^2}{\mathbb{E}_{\mathbb{P}}[\tilde{\xi}_i]^2+\mbox{Var}[\tilde{\xi}_i]}.
\end{equation}
In words, condition \eqref{conditionfortight} guarantees that all
intervals $[\underline{p}_i,\overline{p}_i]$ have a common overlap,
which allows one comonotone two-point construction to attain all
pairwise upper bounds at once.
Then the best possible upper bound is given by
\begin{equation}\label{cauchy-general}
\begin{array}{rllll}
 \displaystyle v^* = \sum_{i < j}\left(\mathbb{E}_{\mathbb{P}}[\tilde{\xi}_i]\mathbb{E}_{\mathbb{P}}[\tilde{\xi}_j]+\sqrt{\mbox{Var}[\tilde{\xi}_i]\mbox{Var}[\tilde{\xi}_j]}\right).
\end{array}
\end{equation}
\end{proposition}
\begin{proof}
For general $n$, an upper bound on $v^*$ (not necessarily attained) is obtained by adding up the respective bivariate bounds from Proposition~\ref{n=2}:
\begin{equation}
\begin{array}{rllll}
  \displaystyle v^* \leq \sum_{(i,j): i < j} \min\left\{\mathbb{E}_{\mathbb{P}}[\tilde{\xi}_i],\mathbb{E}_{\mathbb{P}}[\tilde{\xi}_j],\mathbb{E}_{\mathbb{P}}[\tilde{\xi}_i]\mathbb{E}_{\mathbb{P}}[\tilde{\xi}_j]+\sqrt{\mbox{Var}[\tilde{\xi}_i]\mbox{Var}[\tilde{\xi}_j]}\right\}.
  \end{array}
  \end{equation}
We now construct the extremal joint distribution that attains this bound. Define $\underline{p}_i = \sigma_i^2/((1-\mu_i)^2+\sigma_i^2)$ and $\overline{p}_i = \mu_i^2/(\mu_i^2+\sigma_i^2)$, and note that $\underline{p}_i \leq \overline{p}_i$. Under condition (\ref{conditionfortight}), the intervals $[\underline{p}_i,\overline{p}_i]$ overlap for $i = 1,\ldots,n$. Let $p$ be any value in the overlapping region. Consider the following two-point marginal distribution for each random variable with perfect positive dependence:
\begin{equation*}
 \tilde{\xi}_i =
    \begin{cases}
      \mu_i - \sigma_i\sqrt{\frac{p}{1-p}} & \text{w.p. } \ 1-p,\\
     \mu_i+\sigma_i \sqrt{\frac{1-p}{p}} & \text{w.p. } \ p.
    \end{cases}
    \end{equation*}
In this case for all $(i,j)$ with $i < j$, we have:
    \begin{equation*}
\begin{array}{rllll}
  \mathbb{E}_{\mathbb{P}^*}[\tilde{\xi}_i\tilde{\xi}_j] & =
  & (1-p) \left( \mu_i - \sigma_i\sqrt{\frac{p}{1-p}} \right) \left(\mu_j - \sigma_j \sqrt{\frac{p}{1-p}} \right) + p \left( \mu_i +\sigma_i \sqrt{\frac{1-p}{p}} \right) \left(\mu_j +\sigma_j \sqrt{\frac{1-p}{p}} \right)\\
  &= &\mu_i\mu_j + \sigma_i\sigma_j.
\end{array}
\end{equation*}
Since condition \eqref{conditionfortight} ensures that the intervals
$[\underline{p}_i,\overline{p}_i]$ overlap for all pairs $(i,j)$, the
Case~1 analysis from Proposition~\ref{n=2} gives $\min \{
\mu_i,\mu_j,\mu_i\mu_j+\sigma_i\sigma_j \} =
\mu_i\mu_j+\sigma_i\sigma_j$ for each pair. Hence
\begin{equation}
\begin{array}{rllll}
  \displaystyle v^* \leq \sum_{(i,j): i < j} \left(\mu_i\mu_j + \sigma_i\sigma_j\right),
  \end{array}
  \end{equation}
  and the constructed distribution attains this bound, establishing equality.
\end{proof}

We have the following immediate corollary of Proposition
\ref{n-sumofub} because the overlap condition
\eqref{conditionfortight} is satisfied when all $\mu_i$ are equal and
all $\sigma_i$ are equal.

\begin{corollary}
The bound in Proposition \ref{n-sumofub} is attained for all $n$
when $\mu_i = \mu_0$ and $\sigma_i = \sigma_0$ for some $\mu_0$ and
$\sigma_0$ and all $i = 1,\ldots,n$.
\end{corollary}

\section{Numerical Results} \label{sec:numerical}

In support of Sections \ref{sec:tight}, \ref{sec:dro}, and
\ref{sec:cov}, we next investigate the numerical behavior of our
proposed SDP relaxation for QSMB.
Specifically, we first provide empirical evidence for tightness in
higher dimensions and then present two numerical studies based on
Laplacian quadratic energy.

\subsection{Empirical evidence for tightness when \texorpdfstring{$n \ge 4$}{n >= 4}}
\label{sec:empirical}

Theorem \ref{the:n=3} establishes that the SDP relaxation
(\ref{equ:sdp}) is tight for $n \le 3$. To investigate whether
tightness extends to higher dimensions, we conducted a large-scale
numerical study using 1{,}000 randomly generated submodular instances
with dimensions~$n$ sampled uniformly from $\{4, 5, \ldots, 20\}$.

Each instance is constructed as follows. A symmetric matrix $Q \in
\mathbb{R}^{n \times n}$ is generated by drawing entries from the
standard normal distribution and symmetrizing, after which all
off-diagonal entries are replaced by $-|Q_{ij}|$ to enforce the
submodular structure $Q_{ij} \le 0$ for $i \neq j$. A cost vector $c
\in \mathbb{R}^n$ is drawn independently from the standard normal
distribution. Note that no restrictions are placed on the signs of $c$
or the diagonal of $Q$, so that these instances are not covered by the
existing polynomial-time results of \cite{Kim-Kojima_2003} (which
require $c \le 0$) or those for DR-submodular functions (which require
$\diag(Q) \le 0$).

For each instance, we solve the QP (\ref{equ:qpb}) to global
optimality using Gurobi \citep{gurobi} and then solve the SDP
relaxation (\ref{equ:sdp}) using Mosek \citep{mosek}, both with
stringent solver tolerances ($10^{-8}$ for the QP and $10^{-10}$ for
the SDP). We measure the quality of the relaxation via the relative
gap
\[
\text{relative gap} := \frac{p^* - d^*}{\max\!\big\{1, |p^* + d^*|/2\big\}},
\]
where $p^*$ is the optimal value of the QP and $d^*$ is the dual
objective value of the SDP. We use the dual objective because it
provides a rigorous lower bound via weak duality. By weak duality,
$p^* \ge d^*$, so the relative gap is nonnegative, and a gap of zero
indicates tightness.

Figure \ref{fig:rel_gap} displays the distribution of relative gaps
across all 1{,}000 instances using a kernel density estimate with
gap values plotted on a logarithmic (base-10) scale. The gaps range from approximately $10^{-14}$ to
$10^{-9}$, with the density peaking near $10^{-10}$. These values are
well within solver precision, indicating that the SDP relaxation
(\ref{equ:sdp}) is tight to machine accuracy for every instance
tested. In particular, every relative gap is several orders of
magnitude below $10^{-4}$, a conservative threshold for declaring
numerical tightness.

\begin{figure}[!htbp]
\begin{center}
\includegraphics[scale=0.8]{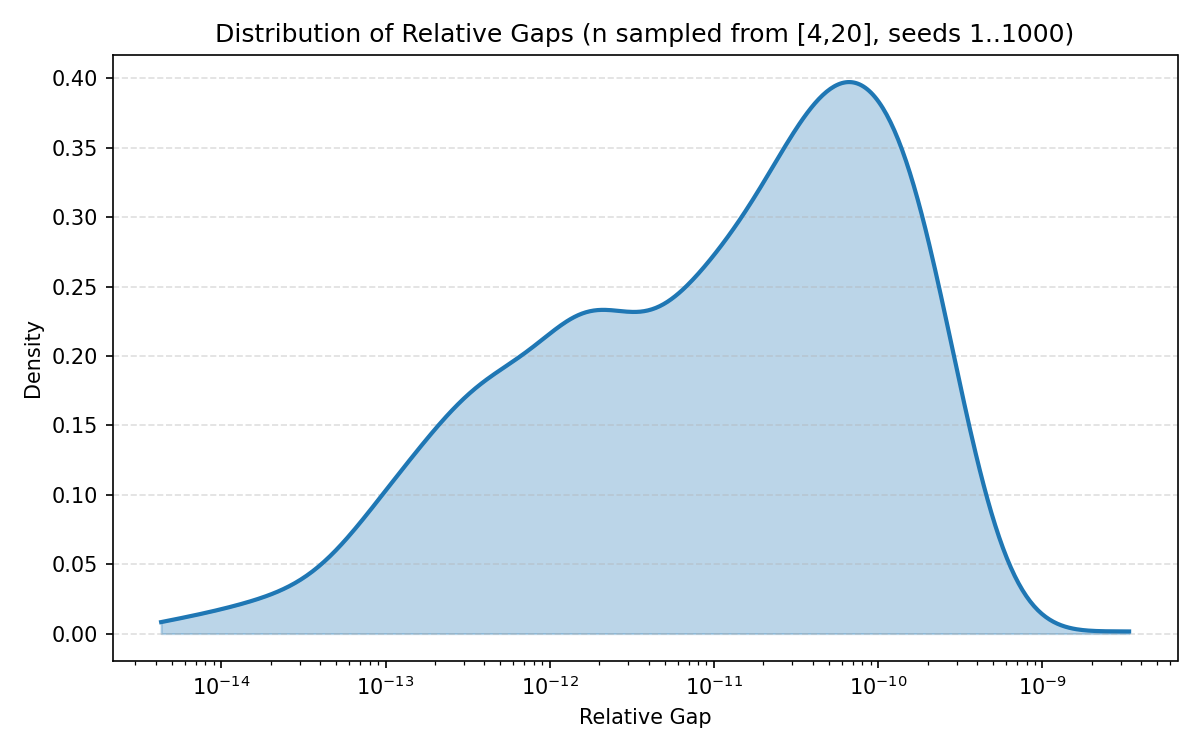}
\caption{Distribution of relative gaps between the QP optimal value
  and the SDP dual bound over 1{,}000 random submodular instances with
  $n$ sampled uniformly from $\{4, \ldots, 20\}$. All gaps are within
  solver precision, providing strong empirical evidence for the
  tightness of relaxation (\ref{equ:sdp}).}
\label{fig:rel_gap}
\end{center}
\end{figure}

These results provide compelling empirical support for the following conjecture.

\begin{conjecture} \label{conj:tight}
For all $n$ and all submodular $(Q,c)$, the SDP relaxation
(\ref{equ:sdp}) is tight.
\end{conjecture}

\subsection{Quadratic energy of Laplacian matrices}

Let $G = (V,E)$ be a simple undirected graph, where $V$ is the set of
vertices and $E$ is the set of edges. We focus on unweighted graphs,
but the results extend directly to nonnegative edge weights. The
Laplacian matrix $L$ associated with $G$ is a symmetric $|V| \times
|V|$ matrix with diagonal entries $L_{ii} = \text{deg}(i)$ for $i \in
V$ and off-diagonal entries $L_{ij} = -1$ when $i$ is adjacent to $j$
and $0$ otherwise. The Laplacian is submodular. It is also diagonally
dominant, and therefore positive semidefinite.

Associated with graph $G$ is the \textit{quadratic energy}
defined by
\[
E(\xi) :=\xi^T L\xi = \sum_{(i,j) \in E} (\xi_i-\xi_j)^2,
\]
where $\xi \in \mathbb{R}^{|V|}$. Minimization of $E(\xi)$ under appropriate constraints on $\xi$ has found applications in graph machine learning and spring and resistor networks, and there are fast algorithms available to solve these problems \citep{Spielman}.

Because the Laplacian is both submodular and positive semidefinite, it
provides a natural setting in which to apply the results of Sections
\ref{sec:dro} and \ref{sec:cov}. We present two examples: the first
concerns distributionally robust subquantile bounds on the energy
(illustrating Section~\ref{sec:dro}), and the second concerns minimum
expected energy under marginal moment information (illustrating
Section~\ref{sec:cov}).

 \begin{example}  Given a distribution $\mathbb{P}$ for $\tilde{\xi}$ and a parameter $\alpha \in [0,1)$, the expectation in the lower $(1-\alpha)$-tail distribution (or $(1-\alpha)$-subquantile) of the quadratic energy is given by the optimal value (see \cite{Rockafellar})
 \begin{equation} \label{equ:local}
\begin{array}{rlll}
\displaystyle \sup_{x}  \left(x+\frac{1}{1-\alpha} \cdot \mathbb{E}_{\mathbb{P}}\left[\min\left(0,E(\tilde{\xi})-x\right)\right]\right).
\end{array}
\end{equation}
Intuitively, the $(1-\alpha)$-subquantile captures the expected energy
in the lower tail of the distribution.
When $\alpha = 0$, the optimal value is the expected value $\mathbb{E}_{\mathbb{P}}[E(\tilde{\xi})]$ and when $\alpha \uparrow 1$, the optimal value converges to the minimum value of $E(\xi)$ over all $\xi$ in the support.

Now assume the distribution $\mathbb{P}$ lies in the ambiguity set
 \begin{equation*}
\begin{array}{lll}
   {\cal R} = \left\{
   \mathbb{P} \in {\cal P}([0,1]^n)  \ : \ \mathbb{E}_{\mathbb{P}}[\tilde{{\xi}}] = {\mu}, \ \mathbb{E}_{\mathbb{P}}[\tilde{{\xi}}\tilde{{\xi}}^T] = \Sigma
   \right\},
 \end{array}
 \end{equation*}
 where the support of $\tilde{{\xi}}$ is contained in $[0,1]^n$, the mean is fixed at $\mu$, and the second moment matrix is fixed at $\Sigma$. Then the worst-case $(1-\alpha)$-subquantile is given by the optimal value of
  \begin{equation} \label{equ:locala}
\begin{array}{rlll}
\displaystyle \sup_{x} \inf_{\mathbb{P} \in {\cal R}} \left(x+\frac{1}{1-\alpha} \cdot \mathbb{E}_{\mathbb{P}}\left[\min\left(0,E(\tilde{\xi})-x\right)\right]\right).
\end{array}
\end{equation}
 Solving this DRO problem with ambiguity set ${\cal R}$ is computationally intractable. However, we can use ${\cal P}$ defined in (\ref{eq:prob}) and ${\cal Q}$ defined in (\ref{eq:del}) as tractable approximations of ${\cal R}$. These yield independently computed lower bounds on the worst-case $(1-\alpha)$-subquantile.

We compare the two bounds for a path graph $G = (V,E)$ with $|V| = 50$, where $\mu$ and $\Sigma$ are set to the first two moments of the independent uniform random vector on $[0,1]^n$. To apply the framework of Section \ref{sec:dro}, we rewrite \eqref{equ:locala} equivalently as
\begin{equation*}
\begin{array}{rlll}
\displaystyle   -\inf_{x} \sup_{\mathbb{P} \in {\cal R}} \left(x+\frac{1}{1-\alpha} \cdot \mathbb{E}_{\mathbb{P}}\left[\min\left(0,-E(\tilde{\xi})-x\right)\right]\right)
\end{array}
\end{equation*}
and all conditions of Section \ref{sec:dro} apply, because the
Laplacian is both submodular and positive semidefinite. Hence, we use
the SDPs in \eqref{primal1momsdp} and \eqref{primal2momsdp} for the
relaxed ambiguity sets ${\cal P}$ and ${\cal Q}$, respectively, to
obtain the corresponding lower bounds on \eqref{equ:locala}. The
results for different values of $\alpha$ are shown in Figure
\ref{fig:bounds}. For $\alpha \leq 0.2$, the bound from ${\cal P}$ is
stronger; for $\alpha > 0.2$, it is weaker. Thus, while both ambiguity
sets provide tractable relaxations, neither subsumes the other.

\begin{figure}
\begin{center}
\includegraphics[scale=0.8]{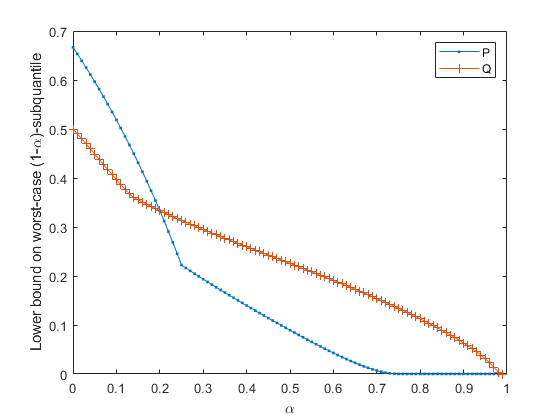}
\caption{Worst-case $(1-\alpha)$-subquantile for ambiguity sets ${\cal Q}$ and ${\cal P}$.}
\label{fig:bounds}
\end{center}
\end{figure}

One could also combine ${\cal P}$ and ${\cal Q}$ to develop tighter bounds for ${\cal R}$. Such an approach using infimal convolution has been adopted in the DRO literature \citep{Droconvex}, but we leave a detailed analysis for future research.
\end{example}

Our next example illustrates the results of Section \ref{sec:cov} by
comparing the full SDP bound to the simpler sum of bivariate bounds,
showing how the gap between them varies with graph type and size.

 \begin{example}
 Given the mean $\mu$ and standard deviation $\sigma$ of the random vector $\tilde\xi$, we compute the minimum expected energy $e^*$ over all distributions supported on the unit hypercube:
 \begin{equation*}
\begin{array}{rllll}
e^* := \displaystyle\min  \left\{\mathbb{E}_{\mathbb{P}}[E(\tilde{\xi})] \ :
\begin{array}{c}
\mathbb{P} \in {\cal P}([0,1]^n), \
\mathbb{E}_{\mathbb{P}}[\tilde{\xi}] = \mu, \mathbb{E}_{\mathbb{P}}[\diag(\tilde{\xi}\tilde{\xi}^T)] =\diag(\mu\mu^T+ \sigma\sigma^T)
\end{array}
\right\}.
\end{array}
\end{equation*}
The results of Section \ref{sec:cov} can be used to calculate $e^*$ via the problem \eqref{momb} with $A = -L$ where $L$ is the Laplacian of the graph. For comparison, since the Laplacian energy decomposes as a sum over edges, a lower bound $\underline{e}^*$ of $e^*$ can be obtained by independently bounding each edge term $(\tilde\xi_i - \tilde\xi_j)^2$ using the bivariate result of Proposition \ref{n=2} and summing.

In the following numerical experiments, we consider three types of graphs---the path graph, the star graph, and the complete graph, each with $n = 2, 10, 20,$ and $50$ vertices. We generated 100 random instances for each type of graph and each value of $n$, where the mean vector $\mu$ and the standard deviation vector $\sigma$ are randomly generated to satisfy the feasibility conditions in Lemma \ref{momfeas}.

In Table \ref{energy}, we report the percentage gap between the $e^*$ and its lower bound $\underline{e}^*$, i.e., $(e^* - \underline{e}^*) / {e}^* \times 100\%$. The results show that, as the size of the graph grows, the gap grows as well. Moreover, the average gap is highest for the complete graph followed by the star graph and the path graph.

 \begin{table}[!hbtp]
 \centering
\begin{tabular}{|c|c|c|c|}\hline
& Path & Star & Complete \\\hline
$n = 2$ & 0 (0) & 0 (0) &  0 (0)\\ \hline
$n = 10$ & 0.445 (0.763) & 0.870 (1.319) &  1.827 (1.079)\\ \hline
$n = 20$ & 0.489 (0.413) &  1.261 (1.578) &2.093 (0.839)\\ \hline
$n = 50$ & 0.549 (0.267) & 1.499 (1.903) & 2.294 (0.465)\\ \hline
\end{tabular}
\caption{Mean (standard deviation) of the percentage gap $\frac{(e^*-\underline{e}^*)}{{e}^*}\times 100\%$ computed over 100 instances.}
\label{energy}
\end{table}
 \end{example}

\section{Additional Proofs} \label{sec:techlemma}

In this section, we present proofs of Propositions \ref{pro:rank2},
\ref{pro:diag}, and \ref{pro:at_least_one_strictly_lower}, which are
used to establish Theorem \ref{the:n=3}.

\subsection{Proof of Proposition \ref{pro:rank2}} 

To prove Proposition \ref{pro:rank2}, we need two lemmas. We start by
showing that a rank-2 feasible solution $Y(x,X)$, which satisfies an
additional (nonlinear) inequality condition, is completely positive
with respect to $\Jc$. 

\begin{lemma} \label{lem:rank2cp}
    Let $(x,X)$ be feasible for (\ref{equ:sdp}) with $r(x,X) = 2$ and
    $X \ge xx^T$. Then $Y(x,X) \in \Cc\Pc(\Jc)$.
\end{lemma}

\begin{proof}
Denote $Y := Y(x,X)$. It is well-known that $\rank(Y) = \rank(X -
xx^T) + 1$, which implies $\rank(X - xx^T) = 1$. Combining $X - xx^T
\succeq 0$ and $X - xx^T \ge 0$, we see $X - xx^T = ww^T$ where $w =
\sqrt{\diag(X) - x \circ x} \ge 0$. (Indeed, a rank-1 PSD matrix can be
written as $ss^T$; entrywise nonnegativity then implies all entries of $s$
have the same sign, and we take $w = |s|$.) Then
\[
Y = 
\begin{pmatrix} 1 & x^T \\ x & xx^T + ww^T \end{pmatrix} =
\begin{pmatrix} 1 & 0 \\ x & w \end{pmatrix}
\begin{pmatrix} 1 & 0 \\ x & w \end{pmatrix}^T.
\]
We wish to apply an orthogonal rotation of the form
\begin{equation} \label{equ:rotation}
\begin{pmatrix} \alpha & \beta \\ u & v \end{pmatrix} :=
\begin{pmatrix} 1 & 0 \\ x & w \end{pmatrix}
\begin{pmatrix} \cos\theta & \sin\theta \\ -\sin\theta & \cos\theta \end{pmatrix},
\quad\quad
\theta \in (0, \pi/2),
\quad\quad
\begin{pmatrix} \alpha \\ u \end{pmatrix},
\begin{pmatrix} \beta \\ v\end{pmatrix} \in \Jc.
\end{equation}
Note that $\alpha = \cos\theta > 0$ and $\beta = \sin\theta > 0$ for
$\theta \in (0, \pi/2)$. This rotation would establish that $Y \in
\Cc\Pc(\Jc)$ because this construction ensures
\[
Y = \begin{pmatrix} \alpha & \beta \\ u & v \end{pmatrix} \begin{pmatrix} \alpha & \beta \\ u & v \end{pmatrix}^T.
\]
It thus remains to show that system (\ref{equ:rotation}) is feasible by
demonstrating $0 \le u \le \alpha \, e$ and $0 \le v \le \beta
\, e$, which are equivalent to
\begin{align*}
    &0 \le \alpha x - \beta w \le \alpha e \quad \Leftrightarrow \quad 0 \le x - (\beta/\alpha) w \le e \\
    &0 \le \beta x + \alpha w \le \beta e \quad \Leftrightarrow \quad 0 \le x + (\alpha/\beta) w \le e.
\end{align*}

Because $0 \le x \le e$ and $\beta/\alpha > 0$, two of these four
vector inequalities necessarily hold; the remaining ones to verify are
\[
0 \le x - (\beta/\alpha) w
\quad\quad \text{and} \quad\quad
x + (\alpha/\beta) w \le e.
\]
Whenever $w_i = 0$, both vector inequalities hold.  Furthermore, when
$x_i \in \{0,1\}$, we must have $x_i^2 = X_{ii} = x_i$ and hence $w_i
= \sqrt{X_{ii} - x_i^2} = 0$. So we may assume $w > 0$ and $0 < x < e$
without loss of generality in which case the two inequalities read
\[
(e - x)^{-1} \circ w \le \frac{\beta}{\alpha} \le x \circ w^{-1},
\]
where inverses are componentwise and are well-defined because $0 < x < e$ and
$w > 0$, and where $\beta/\alpha = \tan\theta$. Define
\[
L := \max_i \, (1 - x_i)^{-1} w_i, \quad\quad U := \min_i \, x_i w_i^{-1}.
\]
Then it suffices to show $L \le U$, i.e.,
$(1 - x_i)^{-1} w_i \le x_j w_j^{-1}$ for all $(i,j)$, which is true because
\[
w_i w_j = X_{ij} - x_i x_j \le x_j - x_i x_j = (1 - x_i) x_j,
\]
where $X_{ij} \le x_j$ follows from symmetry and feasibility: $X_{ij} = X_{ji}
\le x_j$ since $X \le xe^T$. Hence $L \le U$, and we can choose
$\tan\theta \in [L,U]$, which yields a feasible $\theta \in (0,\pi/2)$.
\end{proof}

Our next lemma considers what happens when the nonlinear condition $X
\ge xx^T$ of the previous lemma is not satisfied. 

\begin{lemma} \label{lem:rank2notopt}
Let $(x,X)$ be feasible for (\ref{equ:sdp}) with $r(x,X) = 2$ and $X
\not \ge xx^T$. Then there exists another feasible $(\hat x, \hat X)$
with no worse objective value such that one of the following holds:
\begin{itemize}

\item[(i)] $r(\hat x, \hat X) = 3$ and $a(\hat x, \hat X) > a(x,X)$;

\item[(ii)] $r(\hat x, \hat X) = 2$ and $\hat X \ge \hat x \hat
x^T$.

\end{itemize}
\end{lemma}

\begin{proof}
Our proof strategy is to construct a line segment $\{ Y(\lambda) :
\lambda \in [0,1] \}$, which starts at $Y(0) := Y := Y(x,X)$ and ends
at another rank-2, positive semidefinite matrix $Y(1)$. In addition,
$Y(\lambda)$ is feasible for (\ref{equ:sdp}) for sufficiently small
$\lambda > 0$ and has non-increasing objective value along the
segment. Furthermore, $\rank(Y(\lambda)) = 3$ for all $\lambda \in
(0,1)$. Then we define $\lambda^*$ to be the supremum of $\lambda \in
[0,1]$ such that $Y(\lambda)$ is feasible and show that $Y(\lambda^*)$
satisfies either (i) or (ii).

To construct the line segment, we first identify the symmetric set of
indices where $X_{ij}<x_ix_j$, which is nonempty by assumption:
\[
\Sc := \{ (i,j) : X_{ij} < x_i x_j \} \ne \emptyset.
\]
Also,
\[
X_{ij} < x_i x_j \le \min\{x_i,x_j\} \quad \forall (i,j) \in \Sc,
\]
so the corresponding linear inequalities $X_{ij} \le \min\{x_i,x_j\}$
are inactive on $\Sc$.

As in the proof of Lemma \ref{lem:rank2cp}, since $\rank(Y) = 2$, we
have $\rank(X - xx^T) = 1$ and $X - xx^T \succeq 0$, so $X - xx^T =
ww^T$ for some vector $w$ with $|w| = \sqrt{\diag(X) - x \circ x}$.
Then
\[
Y = \begin{pmatrix}1 & x^T\\ x & xx^T+ww^T\end{pmatrix},
\]
and since $X \not\ge xx^T$, the vector $w$ must have mixed signs.

For $\lambda \in [0,1]$, define the desired line segment via the
convex combination
\[
Y(\lambda) := (1 - \lambda) Y + \lambda Y(1), \qquad \text{where} \qquad
Y(1) := \begin{pmatrix} 1 & x^T \\ x & xx^T + |w||w|^T \end{pmatrix}.
\]
These
relationships can be equivalently expressed via the definitions
\[
\Delta := |w||w|^T - ww^T,
\qquad
x(\lambda) := x,
\qquad
X(\lambda) := X + \lambda \Delta,
\qquad
Y(\lambda) := Y(x(\lambda),X(\lambda)).
\]
Note that $Y(1)$ is positive semidefinite and rank-2 and satisfies
$X(1) \ge x(1) x(1)^T$.  Note also that the $x(\lambda)$ is fixed
along the segment, while $X(\lambda)$ changes.  In particular, let $P
:= \{i : w_i > 0\}$ and $N := \{i : w_i < 0\}$. Since $X_{ij} = x_ix_j
+ w_iw_j$, we have $X_{ij} < x_ix_j$ if and only if $w_iw_j < 0$,
i.e., $(i,j) \in P \times N \cup N \times P$. Hence
$
\Sc = P \times N \cup N \times P,
$
and in particular no diagonal pair belongs to $\Sc$. Moreover,
\[
\Delta_{ij}>0 \text{ for } (i,j)\in \Sc,
\qquad
\Delta_{ij}=0 \text{ for } (i,j)\notin \Sc.
\]
This ensures that $X(\lambda)_{ij}$ is strictly increasing in
$\lambda$ for $(i,j)\in \Sc$, while all other entries of $X(\lambda)$
remain fixed.

We investigate the feasibility of $Y(\lambda)$. Because $Y(\lambda)$
is a convex combination of $Y$ and $Y(1)$, which are both positive
semidefinite, $Y(\lambda)$ is positive semidefinite for all $\lambda
\in [0,1]$. Furthermore, because $w$ has mixed signs, $w$ and $|w|$
are linearly independent, so $(1-\lambda)ww^T+\lambda |w||w|^T$ has
rank $2$ for every $\lambda\in(0,1)$.  By the standard Schur
complement rank identity, it follows that
\[
\rank(Y(\lambda)) = 1 + \rank\big((1-\lambda)ww^T+\lambda |w||w|^T\big)=3
\qquad \forall \lambda \in (0,1).
\]
With regards to the linear inequalities defining the feasible region,
the sign pattern of $\Delta$ discussed in the prior paragraph as well
as the fact that $X_{ij} < \min \{x_i, x_j\}$ for all $(i,j) \in \Sc$
ensures $X(\lambda) \le x(\lambda) e^T = xe^T$ for sufficiently small
$\lambda > 0$. We conclude that, for sufficiently small $\lambda>0$,
$Y(\lambda)$ is feasible.

Next we consider the change in objective value along the line
segment.  Since $x(\lambda) = x$ is constant, the change in objective
value is determined by
\[
Q\bullet X(\lambda)=Q\bullet X + \lambda\, Q\bullet\Delta,
\]
and therefore the sign pattern of $\Delta \ge 0$ discussed above,
which is supported only on off-diagonal indices in $\Sc$, implies $Q
\bullet \Delta \le 0$ by submodularity. So the objective value is
non-increasing along the segment.

Now define
\[
\lambda^*:=\sup\{\lambda\in[0,1]: X(\lambda)\le xe^T\},
\]
and set $(\hat x,\hat X):=(x,X(\lambda^*))$. If $\lambda^*<1$, then at
least one inequality indexed by $\Sc$ is active at $(\hat x,\hat X)$,
so $a(\hat x,\hat X)>a(x,X)$, and since $\lambda^*\in(0,1)$ we have
$r(\hat x,\hat X)=3$; this is case (i). If $\lambda^*=1$, then
$X(1) \ge xx^T$ and $\rank(Y(1)) = 2$ as noted above; this is case (ii).
\end{proof}

By combining the above two lemmas, we can now prove Proposition \ref{pro:rank2}.

\begin{proof}[Proof of Proposition \ref{pro:rank2}]
If $X \ge xx^T$, then Lemma \ref{lem:rank2cp} implies that $Y(x,X)$ is
completely positive with respect to $\Jc$, and hence by Lemma \ref{lem:cp}, the
relaxation is tight. On the other hand, if $X \not \ge xx^T$, Lemma
\ref{lem:rank2notopt} implies that there exists an alternative optimal solution
$(\hat x, \hat X)$ with either $r(\hat x, \hat X) = 2$ and $\hat X \ge
\hat x \hat x^T$ or $r(\hat x, \hat X) = 3$ and more active inequalities.
In the former case, Lemma \ref{lem:rank2cp} again implies that the relaxation is
tight.

In the latter case, consider the minimal face defined by $(\hat x,
\hat X)$, which necessarily excludes $(x,X)$ because there are more
active inequalities. Within that face, consider an extreme point with
rank at most 2. If such a point exists, repeat the same argument at
that new rank-2 point: either the SDP relaxation is tight, or we obtain
another rank-3 optimal point with more active inequalities.
Since the number of inequalities is finite, this process terminates,
yielding a proper subface of the optimal face in which all extreme
points have rank at least 3.
\end{proof}

\subsection{Proof of Proposition \ref{pro:diag}}

We start with a key lemma showing that $Y(x,X)$ is completely positive
with respect to $\Jc$ whenever all s-upper inequalities are active and
at most one diagonal inequality is inactive. We remark that the first
statement in the lemma does not assume $Y(x,X) \succeq 0$.

\begin{lemma} \label{lem:nearly_maximal_combined}
Let $(x,X)$ be sorted by $x$ with all s-upper inequalities active. If
all diagonal inequalities are active, then $Y(x,X) \in \Cc\Pc(\Jc)$.
The same conclusion holds if $(x,X)$ is feasible for (\ref{equ:sdp})
and all diagonal inequalities are active except for a single $X_{jj}
< x_j$.
\end{lemma}

\begin{proof}
Let $Y := Y(x,X)$. We have $X_{ij} = \min \{ x_i, x_j \}$ for all
index pairs except possibly for a single $X_{jj} < x_j$. We prove the
result by induction on $n$.

For the base case $n = 1$, if $X_{11} = x_1$, we verify that
\[
    Y = \begin{pmatrix} 1 & x_1 \\ x_1 & x_1 \end{pmatrix} = x_1
    \begin{pmatrix} 1 \\ 1 \end{pmatrix} \begin{pmatrix} 1 \\ 1
    \end{pmatrix}^T + (1 - x_1) \begin{pmatrix} 1 \\ 0 \end{pmatrix}
    \begin{pmatrix} 1 \\ 0 \end{pmatrix}^T \in \Cc\Pc(\Jc).
\]
If $X_{11} < x_1$ and $x_1^2 = X_{11}$, then $Y$ is rank-1 and hence
in $\Cc\Pc(\Jc)$. If $X_{11} < x_1$ and $x_1^2 < X_{11}$, then $Y
\succ 0$, and choosing $\eps > 0$ maximally such that $Y_\eps := Y -
\eps \, ee^T \succeq 0$ with $e := (1, 1)^T$ yields $Y_\eps = uu^T$
for some vector $u$. The linear inequalities in $\Lc$ are preserved as
$Y$ shifts to $Y_\eps$ because $ee^T$ satisfies all inequalities in
$\Lc$ with equality. Hence, $Y_\eps \in \Lc$, and thus $u \in \Jc$,
which in turn implies $Y_\eps = uu^T + \eps \, ee^T \in \Cc\Pc(\Jc)$.

Now consider $n > 1$. If $x_1 = 1$, then $x = e$ and $Y =
\tbinom{1}{e}\tbinom{1}{e}^T \in \Cc\Pc(\Jc)$. Suppose $x_1 < 1$ and
either all diagonal inequalities are active or $j \neq 1$, where
$j$ is the index of the inactive diagonal. In both
situations, $X_{11} = x_1$, and so the second column of $Y$ equals
$x_1 e$. Define $\tilde x_i := (x_i - x_1)/(1 - x_1)$ for $i = 2,
\ldots, n$. Then
\[
    Y = x_1 \, ee^T + (1 - x_1) \, \tilde Y,
\]
where $\tilde Y$ has zero second row and column. After deleting
them, $\tilde Y$ satisfies the conditions of the lemma with respect to
$\tilde x$ (note $0 \le \tilde x_2 \le \cdots \le \tilde x_n \le 1$):
when all diagonals are active, $\tilde Y$ again has all diagonals
active; when at most one diagonal is inactive, the inactive diagonal
persists in $\tilde Y$; and $\tilde Y
\succeq 0$ because $Y - x_1 \, ee^T$ is the Schur complement of
$X_{11} = x_1$ in $Y \succeq 0$. By the induction hypothesis, $\tilde
Y \in \Cc\Pc(\Jc)$, and since $ee^T \in \Cc\Pc(\Jc)$, the result
follows.

Finally, suppose $x_1 < 1$ and there exists an inactive diagonal
inequality at $j = 1$. Note $x_1 > 0$, since feasibility gives $X_{11}
\ge x_1^2$ and $X_{11} < x_1$ forces $x_1 > 0$. If $x_n = 1$, the
first and last rows of $Y$ coincide because $X_{nn} = x_n = 1$ and
$X_{in} = x_i$ for all $i$. Then the result follows by induction on
the reduced matrix. If $x_n < 1$, the difference of the first and last
columns of $Y$ is $(1 - x_n) \, v$ with $v := (1, 0, \ldots, 0)^T$, so
$v$ is in the range space of $Y$.  Choosing $\eps > 0$ maximally so
that $Y - \eps \, vv^T \succeq 0$ reduces the rank while changing only
the top-left entry, preserving all linear inequalities in $\Lc$. After
rescaling by the top-left entry, the resulting matrix retains the same
structure, so the process can be repeated until the rank reaches
$1$---giving $Y \in \Cc\Pc(\Jc)$ by construction---or $x_n = 1$ after
rescaling, which is handled above.
\end{proof}

We are now ready to prove Proposition \ref{pro:diag}.

\begin{proof}[Proof of Proposition \ref{pro:diag}]
Given optimal $(x,X)$ with $\diag(X) = x$, assume $(x,X)$ is sorted by
$x$ without loss of generality. Define $(\hat x, \hat X)$ by $\hat x
:= x$ and $\hat X_{ij} := \min \{ x_i, x_j \}$ for all $i,j$, so that
all s-upper and diagonal inequalities are active for $(\hat x, \hat
X)$.  Then $Y(\hat x, \hat X)$ is completely positive with respect to
$\Jc$ by Lemma \ref{lem:nearly_maximal_combined} and hence feasible for
(\ref{equ:sdp}).  Moreover, since $(\hat x, \hat X)$ differs from $(x,
X)$ only in the off-diagonal entries of $\hat X$ such that
$\text{offdiag}(X) \le \text{offdiag}(\hat X)$, the submodularity of
$Q$ implies $(\hat x, \hat X)$ is optimal for (\ref{equ:sdp}) as well.
This proves the proposition by Lemma \ref{lem:cp}. The second case
follows directly from Lemma \ref{lem:nearly_maximal_combined}.

\end{proof}

\subsection{Proof of Proposition \ref{pro:at_least_one_strictly_lower}}

The proof of Proposition \ref{pro:at_least_one_strictly_lower} relies
on the following lemma, which shows that if an s-lower inequality is
active, then the corresponding columns of $Y(x,X)$ are identical.

\begin{lemma} \label{lem:lower_active_duplicate}
Let $(x,X)$ be feasible for (\ref{equ:sdp}). After sorting by $x$,
suppose that, for $i < j$, the s-lower inequality $X_{ij} \le x_j$ is
active.  Then the columns of $Y(x,X)$ indexed by $i$ and $j$ are
identical.
\end{lemma}

\begin{proof}
Since $X_{ij} = x_j$ and $X_{ij} \le x_i \le x_j$, we have $x_i =
x_j = X_{ij}$. The $3 \times 3$ principal submatrix of $Y := Y(x,X)$
indexed by $0, i, j$ becomes
\[
\begin{pmatrix}
    1 & x_i & x_i \\
    x_i & X_{ii} & x_i \\
    x_i & x_i & X_{jj}
\end{pmatrix} \succeq 0.
\]
Positive semidefiniteness gives $X_{ii} X_{jj} \ge x_i^2$, while the
diagonal inequalities give $X_{ii} \le x_i$ and $X_{jj} \le x_i$. Together, $X_{ii}
= X_{jj} = x_i$. Now let $v := e_i - e_j \in \mathbb{R}^{n+1}$, where
$e_i$ and $e_j$ are standard basis vectors in the index space of
$Y$. Then
\[
    v^T Y v = X_{ii} - 2 X_{ij} + X_{jj}
    = x_i - 2 x_i + x_i = 0.
\]
Since $Y \succeq 0$, this implies $Yv = 0$, which gives the result.
\end{proof}

\begin{proof}[Proof of Proposition \ref{pro:at_least_one_strictly_lower}]
By assumption, after sorting by $x$, there exists $i < j$ such that
the s-lower inequality $X_{ij} \le x_j$ is active. By
Lemma~\ref{lem:lower_active_duplicate}, the columns of $Y(x,X)$
indexed by $i$ and $j$ are identical.

Define the $(n-1)$-dimensional data $(\hat{Q}, \hat{c})$ by merging
indices $i$ and $j$. In particular, set $\hat{Q}_{ii} := Q_{ii} +
Q_{jj} + 2Q_{ij}$, $\hat{Q}_{ik} := Q_{ik} + Q_{jk}$ for $k \ne i,j$,
and other entries of $\hat{Q}$ unchanged. Also set $\hat{c}_i := c_i +
c_j$, while keeping other entries of $\hat{c}$ unchanged. Then delete
index~$j$ throughout. Since the off-diagonal entries of $Q$ are
nonpositive, $(\hat{Q}, \hat{c})$ is submodular. Let $(\hat{x},
\hat{X})$ denote the reduced pair obtained from $(x,X)$ by deleting
index~$j$. Then $(\hat{x}, \hat{X})$ is feasible for the
$(n-1)$-dimensional SDP relaxation with data $(\hat{Q}, \hat{c})$. In
particular, $Y(\hat{x},\hat{X})$ is a principal submatrix of $Y(x,X)
\succeq 0$, and $\hat{X} \le \hat{x} e^T$ is inherited from $X \le
xe^T$, where abusing notation $e$ is the vector of all ones of the
appropriate size in each case.

Let $\rho_n$ and $\rho_{n-1}$ denote the optimal SDP values in dimensions
$n$ and $n-1$ for data $(Q,c)$ and $(\hat{Q}, \hat{c})$, respectively.
Also let $v_n$ and $v_{n-1}$ denote the corresponding optimal values
of the underlying nonconvex QPs.  The duplicate structure ensures
$\hat{Q} \bullet \hat{X} + \hat{c}^T \hat{x} = Q \bullet X + c^T x =
\rho_n$, so $\rho_{n-1} \le \rho_n$. Since the reduced QP is equivalent to
minimizing $x^T Q x + c^T x$ over $[0,1]^n$ with $x_i = x_j$, we have
$v_{n-1} \ge v_n$. By the induction hypothesis, $\rho_{n-1} = v_{n-1}$.
Therefore,
\[
\rho_n \le v_n \le v_{n-1} = \rho_{n-1} \le \rho_n,
\]
and all inequalities are equalities. In particular, $\rho_n = v_n$, so the
SDP relaxation is tight.
\end{proof}

\section{Conclusions} \label{sec:conclusion}

We conclude the paper by summarizing the complexity of minimizing a
general quadratic function $f(x) := x^T Q x + c^T x $ over the sets
$\{0,1\}^n$ and $[0,1]^n$; see Table \ref{complexity}. In particular,
the polynomial-time solvable cases over $\{0,1\}^n$ are well-known as
discussed in the Introduction, while this paper establishes
tractability for the submodular case over $[0,1]^n$ when $n \le 3$ and
provides empirical evidence supporting tractability more generally.

Let us discuss the NP-hardness results provided in the table. First,
the equivalence of minimizing a convex quadratic $f$ over $\{0,1\}^n$
with QUBO  (quadratic unconstrained binary optimization) follows from
the equality $$x^T Q x +  c^T x = x^T (Q-\lambda_{min}I) x +
(c+\lambda_{min} e)^T x$$ for $x \in \{0,1\}^n,$ where $x_i^2= x_i$
and $\lambda_{min}$ is the minimum eigenvalue of $Q$. Note that the
matrix $Q-\lambda_{min}I$ is positive semidefinite. Since QUBO is
NP-hard, so is the former. Second, consider $Q=Q_1+Q_2$ with $Q_1$
positive semidefinite and $Q_2$ submodular. The equivalence of
minimizing $f$ over $[0,1]^n$ in this case with QP over the box
follows from a splitting argument as follows. Given a general
symmetric matrix $Q$, split it as $Q = Q_++Q_-$ where $Q_+$ has all
the nonnegative entries and $Q_-$ has all the negative entries. Then
$$x^T Q x +  c^T x = x^T (Q_+-\gamma_{min}I) x + x^T
(Q_{-}+\gamma_{min}I) x+ c^T x$$ where $\gamma_{min}$ is the minimum
eigenvalue of $Q_+$. This makes the matrix $Q_+-\gamma_{min}I$ convex
and $Q_{-}+\gamma_{min}I$ submodular. Since QP over the box is
NP-hard, so is the former.
\begin{table}[!hbtp]
\resizebox{\textwidth}{!}{\begin{tabular}{|c|c|c|c|}\hline
& $f$ convex & $f$ submodular& $f$ sum of convex and submodular\\\hline
$\{0,1\}^n$ & NP-hard (QUBO) & P (LP) & NP-hard\\
& \cite{Garey}& \cite{Padberg}\footnotemark & (by setting $Q_2 = 0$) \\ \hline
$[0,1]^n$& P (convex QP) & {P (SDP, $n\le3$)\footnotemark} & NP-hard (QP over box) \\
& \cite{Khachiyan} & {(This paper)} & \cite{Horst.etal.2000}\\ \hline
\end{tabular}}
\caption{Complexity of minimizing general $f(x) = x^T Q x +  c^T x$ over $\{0,1\}^n$ and $[0,1]^n$}
\label{complexity}
\end{table}
\footnotetext{Over $\{0,1\}^n$, the identity $x_i^2 = x_i$ converts diagonal terms to linear terms, so submodularity (off-diagonal entries of $Q$ nonpositive) suffices for LP solvability via the boolean quadric polytope \citep{Padberg}. Note that \cite{Padberg} is also cited in the Introduction for the stronger DR-submodular case over $[0,1]^n$, which additionally requires nonpositive diagonal entries.}
\footnotetext{We conjecture this holds for general $n$; see Conjecture \ref{conj:tight} and the supporting empirical evidence in Section \ref{sec:empirical}.}



\end{onehalfspace}

\bibliographystyle{abbrvnat}
\bibliography{main}

\end{document}